\newtheorem{teo}{Theorem}[section]
\newtheorem{lemma}[teo]{Lemma}
\newtheorem{prop}[teo]{Proposition}
\newtheorem{cor}[teo]{Corollary}
\newtheorem{rem}[teo]{Remark}
\newtheorem{ese}[teo]{Example}
\numberwithin{equation}{section}
\newcommand\R{\mathbb R}
\newcommand\mbb\mathbb
\newcommand\mbf\mathbf
\newcommand\mcal\mathcal
\newcommand\mfrak\mathfrak
\renewcommand\mit\mathit
\newcommand\mrm\mathrm
\newcommand\msf\mathsf
\renewcommand\a\alpha
\renewcommand\b\beta
\newcommand\g\gamma
\newcommand\G\Gamma
\renewcommand\d\delta
\newcommand\D\Delta
\newcommand\e\varepsilon
\newcommand\z\zeta
\renewcommand\t\theta
\newcommand\Th\Theta
\newcommand\la\lambda
\newcommand\La\Lambda
\newcommand\s\sigma
\newcommand\si\varsigma
\newcommand\Si\Sigma
\newcommand\ups\upsilon
\newcommand\U\Upsilon
\newcommand\ph\varphi
\renewcommand\o\omega
\renewcommand\O\Omega
\newcommand\wt\widetilde
\newcommand\wh\widehat
\newcommand\ol\overline
\newcommand\ul\underline
\newcommand\mr\mathring
\newcommand\ub\underbrace
\newcommand\pa\partial
\newcommand\n\nabla
\newcommand\fa\forall
\newcommand\ex\exists
\newcommand\es\emptyset
\newcommand\wk\rightharpoonup
\newcommand\inc\hookrightarrow
\newcommand\linf\varliminf
\newcommand\lsup\varlimsup
\newcommand\os\overset
\newcommand\us\underset
\newcommand\sr\stackrel
\newcommand\Ot\Leftarrow
\newcommand\To\Rightarrow
\newcommand\map\mapsto
\newcommand\ot\leftarrow
\newcommand\lot\longleftarrow
\newcommand\lto\longrightarrow
\newcommand\tot\leftrightarrow
\newcommand\ltot\longleftrightarrow
\newcommand\sm\setminus
\renewcommand\Cup\bigcup
\renewcommand\Cap\bigcap
\newcommand\sub\subset
\newcommand\Sub\Subset
\newcommand\sne\subsetneq
\newcommand\bus\supset
\newcommand\Bus\Supset
\newcommand\eq\equiv
\newcommand\ox\otimes
\newcommand\Ox\bigotimes
\newcommand\pl\oplus
\newcommand\Pl\bigoplus
\newcommand\x\times
\renewcommand\c\circ
\newcommand\q\quad
\renewcommand\l\left
\renewcommand\r\right
\newcommand\fr\frac
\def\sideremark#1{\ifvmode\leavevmode\fi\vadjust{\vbox to0pt{\vss
 \text to 0pt{\hskip\hsize\hskip1em
 \vbox{\hsize2.1cm\tiny\raggedright\pretolerance10000
 \noindent #1\hfill}\hss}\vbox to15pt{\vfil}\vss}}}%
\begin{document}

\everymath{\displaystyle}

\title{Asymptotic behavior of minimal solutions of $-\D u=\la f(u)$ as $\la\to-\infty$.}
\author{Luca Battaglia\thanks{Universit\`a degli Studi Roma Tre, Dipartimento di Matematica e Fisica, Largo S. Leonardo Murialdo 1, 00146 Roma - lbattaglia@mat.uniroma3.it}, Francesca Gladiali\thanks{Universit\`a degli Studi di Sassari, Dipartimento di Chimica e Farmacia, Via Piandanna 4, 00710 Sassari - fgladiali@uniss.it}, Massimo Grossi\thanks{Sapienza Universit\`a di Roma, Dipartimento di Matematica, Piazzale Aldo Moro 5, 00185 Roma - grossi@mat.uniroma1.it}}
\date{}

\maketitle

\begin{abstract}
\noindent
We consider the following Dirichlet problem
\begin{equation}\label{pfl}
\l\{\begin{array}{ll}-\D u=\la f(u)&\text{in }\O\\u=0&\text{on }\pa\O\end{array}\r.,\tag{$\mcal P_f^\la$}
\end{equation}
with $\la<0$ and $f$ non-negative and non-decreasing.\\
We show existence and uniqueness of solutions $u_\la$ for any $\la$ and discuss their asymptotic behavior as $\la\to-\infty$. In the expansion of $u_\la$ \emph{large solutions} naturally appear.
\end{abstract}

\

\section{Introduction and main results}\

In this paper we consider the problem 
\begin{equation}
\l\{\begin{array}{ll}-\D u=\la f(u)&\text{in }\O\\u=0&\text{on }\pa\O\end{array}\r.,\tag{$\mcal P_f^\la$}
\end{equation}
where $\la$ is a real parameter, $\O$ is a smooth bounded domain of $\R^N$ with $N\ge2$ and $f$ is a real function satisfying the following assumption,
\begin{equation}\label{ipo}
f\text{ non-decreasing},\q\q\q f(0)>0,\q\q\q f|_{0<f<f(0)}\text{ is }C^1.
\end{equation}
In this setting Crandall and Rabinowitz (\cite{cr}, Section $4$, see also \cite{mp}) prove for $0<\la<\la^*$ and $f$ convex the existence of a branch $u_\la$ of $stable$ positive solutions, i.e. satisfying
$$\la_1\big(-\D-\la f'(u_\la)I\big)>0,$$
(here $\la_1$ demotes the first eigenvalue with zero Dirichlet boundary conditions). This branch is \emph{minimal}, in the sense that any other solution $u$ of \eqref{pfl} verifies $u\ge u_\la$. There is a huge literature about minimal, non-minimal and stable solutions to \eqref{pfl}, see \cite{dupaigne-book} as an example. We just recall some results about two nonlinearities which play an important role in this paper:
\begin{itemize}
\item $f(t)=\l((t-t_0)^+\r)^p$ with $p\ge1$ and $t_0<0$ (\emph{Problem of confined plasma}). A lot of authors studied this problem (\cite{bm,bb,cf,t}) where the set $\{u>t_0\}$ is the $plasma$ and the set $\{u<t_0\}$ is the $vacuum$.
\item $f(t)=e^t$ (\emph{the Liouville equation}). There is a very large literature mainly when $\O\subset\R^2$, see for instance \cite{bp,egp,gg,s}. Much less is known in higher dimensions (\cite{jl}). Observe that in this case the function $v=-u$ solves $-\D v=-\la e^{-v}$, an equation which has been derived in \cite{gl} in the study of the stationary states for a model of evolution of the electronic density in the plasma (see also \cite{clmp1,clmp2}).
\end{itemize}

The aim of this paper is to complete the study of the branch of stable solutions by considering the case $\la<0$. Of course in this event by the maximum principle we get that $u<0$ in $\O$.\\ 
Quite surprisingly, this case was not considered in the literature and we will see that some new and interesting phenomena occur. It is easy to show that for any $\la<0$ there exists a unique solution $u_\la$ to \eqref{pfl}. So the interesting problem is to study the asymptotic behavior of $u_\la$ as $\la\to-\infty$.\\
In order to state our first result let us introduce the following number:
\begin{equation}\label{t0}
t_0:=\inf\{t\in(-\infty,0):\,f(t)>0\}\in[-\infty,0).
\end{equation}
We observe that $t_0$ is the same number which appears in the \emph{plasma problem} and $t_0=-\infty$ in the Liouville equation.
Next theorem gives a description of the solution to \eqref{pfl} for any $f$ verifying \eqref{ipo}. 
\begin{teo}\label{casogen}$ $\\
Assume $f$ satisfies \eqref{ipo}. Then, for any $\la<0$, \eqref{pfl} has a unique stable solution $u_\la$.\\
Moreover, $t_0<u_\la(x)<0$ for any $x\in\O$, where $t_0$ is defined by \eqref{t0}, and 
\begin{equation}\label{b1}
u_\la(x)\us{\la\to-\infty}\to t_0\text{ in }L^\infty_{\mrm{loc}}(\O).
\end{equation}
\end{teo}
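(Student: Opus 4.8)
The plan is to establish existence, uniqueness, the pointwise bounds, and finally the asymptotic convergence, in that order. For existence I would use the method of sub- and supersolutions: since $f\ge0$ and $\la<0$, the constant function $\ol u\eq0$ is a supersolution, while a subsolution $\ul u$ can be built by solving the linear problem $-\D\ul u=\la f(0)$ with zero boundary data (whose solution is negative and bounded below), after checking that $\ul u\ge t_0$ so that monotonicity of $f$ applies; iterating then produces a solution $u_\la$ with $\ul u\le u_\la\le0$. Uniqueness should follow from the sign of the nonlinearity's derivative: if $u,v$ are two solutions, testing the equation for $u-v$ against $(u-v)^+$ and using that $\la<0$ together with the monotonicity of $f$ (so $\la(f(u)-f(v))(u-v)\le0$) forces $u=v$; this simultaneously gives stability, since $\la f'(u_\la)\le0$ makes $-\D-\la f'(u_\la)I$ a nonnegative perturbation of $-\D$, hence $\la_1>0$.

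For the pointwise bounds $t_0<u_\la<0$, the upper bound $u_\la<0$ is immediate from the maximum principle because $\la f(u_\la)\le0$ and the boundary data vanish. For the lower bound I would argue that $u_\la$ cannot reach $t_0$: if $\inf u_\la\le t_0$ at an interior point, then on the set $\{u_\la\le t_0\}$ we have $f(u_\la)=0$, so $u_\la$ is harmonic there and by the strong maximum principle cannot attain an interior minimum unless it is constant, which contradicts the boundary condition; a strict inequality then follows. The slightly delicate point is the case $t_0=-\infty$, where the lower bound is vacuous but one still needs an a priori estimate to run the iteration—there I would instead rely on the $\la<0$ sign to bound $u_\la$ below by the harmonic-type comparison above.

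The heart of the theorem is the convergence \eqref{b1} to $t_0$ in $L^\infty_{\mrm{loc}}(\O)$ as $\la\to-\infty$. The strategy is to exploit monotonicity of $u_\la$ in $\la$: since $f\ge0$, decreasing $\la$ makes the right-hand side more negative, so $u_\la$ is monotone decreasing in $-\la$ and, being bounded below by $t_0$, converges pointwise to some limit $u_\infty\ge t_0$. I would first show $u_\infty\eq t_0$ by a contradiction/energy argument: suppose on a set of positive measure $u_\infty>t_0$, so $f(u_\infty)>0$ there; testing the equation against a fixed nonnegative compactly supported $\ph$ gives $\int\n u_\la\cdot\n\ph=\la\int f(u_\la)\ph$, whose right-hand side tends to $-\infty$ while the left-hand side stays controlled by local gradient estimates, a contradiction. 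To upgrade pointwise to $L^\infty_{\mrm{loc}}$ convergence, I would note that the limit $t_0$ is constant and invoke a Dini-type argument: a monotone sequence of continuous functions converging pointwise to a continuous limit converges uniformly on compacta.

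The step I expect to be the main obstacle is controlling $u_\la$ \emph{uniformly away from the boundary} when $t_0=-\infty$, since then there is no uniform lower bound and $u_\la\to-\infty$ everywhere in the interior; here the convergence \qm{to $t_0$} must be read as divergence to $-\infty$ locally uniformly, and the argument shifts to showing that for every $M>0$ and every compact $K\Sub\O$ one has $u_\la<-M$ on $K$ for $-\la$ large. The cleanest route is comparison with an explicit barrier: fix a ball $B\Sub\O$ and compare $u_\la$ with a multiple of the (negative) torsion function or the first Dirichlet eigenfunction of $B$, using that $f(u_\la)\ge f(-M)>0$ as long as $u_\la$ has not yet dropped below $-M$, to force the required lower escape; making this quantitative and matching it against the local gradient/Harnack estimates is where the real work lies.
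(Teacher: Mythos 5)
Your handling of existence, uniqueness and stability is essentially fine and close to the paper's (you use sub/supersolutions where the paper runs an implicit-function/continuation argument; your test against $(u-v)^+$ is the paper's uniqueness proof up to the choice of test function). Only note that your requirement $\ul u\ge t_0$ for the subsolution is both unnecessary (monotonicity of $f$ holds on all of $\R$) and false for $\la$ very negative, since $\ul u=\la f(0)\phi$ eventually drops below any fixed $t_0$. For the convergence \eqref{b1} in the case $t_0\in\R$ your route is genuinely different from the paper's: the paper reduces to balls, where Gidas--Ni--Nirenberg gives radial monotonicity, compares with $\la f(t_1)\phi$ ($\phi$ the torsion function \eqref{tors}), and then squeezes a general $\O$ between an inscribed and a circumscribed ball via Lemma \ref{confr}; you work directly in $\O$ with monotonicity in $\la$ plus a test-function identity, avoiding symmetry entirely. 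This works, but needs two repairs. First, control the left-hand side by writing $\int_\O\n u_\la\cdot\n\ph=-\int_\O u_\la\,\D\ph$, which is bounded by $|t_0|\,\|\D\ph\|_{L^1}$ since $t_0\le u_\la\le0$; the \qm{local gradient estimates} you invoke are neither available a priori nor needed. Second, and more seriously, your contradiction argument only yields $u_\infty=t_0$ \emph{almost everywhere}, while Dini's theorem requires pointwise convergence \emph{everywhere}; upper semicontinuity of the decreasing limit does not close this. The fix is to use the equation once more: $u_\la-t_0\ge0$ is subharmonic, so $u_\la(x)-t_0\le\frac1{|B_r(x)|}\int_{B_r(x)}(u_\la-t_0)\to0$ by dominated convergence, which gives local uniform convergence directly and makes Dini superfluous.

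Your proof of the strict bound $u_\la>t_0$ is incorrect. The weak bound is fine: on the open set $\{u_\la<t_0\}$ the solution is harmonic and equals $t_0$ on the boundary of that set (which cannot meet $\pa\O$), so it is $\eq t_0$ there, a contradiction. But if $u_\la$ touches $t_0$ at a single interior point, the set $\{u_\la\le t_0\}=\{u_\la=t_0\}$ may have empty interior; there is then no open region on which $u_\la$ is harmonic and nothing to which the strong maximum principle applies. Worse, $u_\la$ is \emph{sub}harmonic ($\D u_\la=-\la f(u_\la)\ge0$), and subharmonic functions can perfectly well attain interior minima, so \qm{cannot attain an interior minimum unless constant} is false for $u_\la$. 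The paper gets strictness by a different mechanism: strict monotonicity of $\la\map u_\la$, obtained from the strong maximum principle applied to \emph{differences} of solutions (Lemma \ref{confr}), combined with $u_\la\downarrow t_0$. Be aware that this step leans on the local Lipschitz hypothesis of Lemma \ref{confr}: when $f$ is not Lipschitz at $t_0$, e.g. $f(t)=\l((t-t_0)^+\r)^{1/2}$, which satisfies \eqref{ipo}, dead cores with $u_\la\eq t_0$ genuinely form for $\la\ll0$, so no argument of your local type could succeed there.

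The main gap is the case $t_0=-\infty$, which you explicitly leave open (\qm{where the real work lies}) and which contains the model nonlinearity $f(t)=e^t$. The barrier comparison you sketch needs $f(u_\la)\ge f(-M)$ on an entire ball $B$, i.e. $u_\la\ge-M$ on \emph{all} of $B$; but negating the conclusion, and using monotonicity in $\la$ plus continuity, only produces a single point $\ol x$ with $u_\la(\ol x)\ge-M$ for every $\la$. Passing from this point bound to a bound on a region is precisely the crux, and your proposal contains no mechanism for it. The paper's mechanism is symmetry: on balls, Gidas--Ni--Nirenberg makes $u_\la$ radially increasing, so a point bound propagates to the outer annulus where the torsion comparison applies, and general domains follow from the two-ball squeeze. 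If you want to keep your direct approach, the natural substitute is the weak Harnack inequality for the nonnegative superharmonic function $-u_\la$: it converts $-u_\la(\ol x)\le M$ into $\int_{B_{2r}(\ol x)}(-u_\la)\le CM|B_{2r}|$, hence $|\{u_\la<-M'\}\cap B_r(\ol x)|$ is arbitrarily small for $M'$ large, and then your own test-function argument, run on $\{u_\la\ge-M'\}\cap B_r(\ol x)$, yields the contradiction. Without this, or an equivalent ingredient, the theorem remains unproved for $t_0=-\infty$.
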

By the definition of $f$ we have that if $f>0$ then $t_0=-\infty$ and so there is a \emph{full blow-up} of the solution $u_\la$ in $\O$ (these is the case of $f(t)=e^t$). On the other hand, for solutions of the confined plasma problem described before, we get that $u_\la(x)\us{\la\to-\infty}\to t_0$ in $\O$.\\
This means that when $\la$ is \emph{negative}, i.e. we have negative pressure, there is no \emph{vacuum} in $\O$ and so no \emph{free boundary} appears (see \cite{cf}).\\

An interesting property of the solution $u_\la$ for $t_0\in\R$ is the following (see Proposition \ref{lem-conv})
$$\la f(u_\la)\to0\q\text{as }\la\to-\infty.$$
However note that this is not true if $t_0=-\infty$ (see Example \ref{ese}).\\
The result in \eqref{b1} is not surprising if one looks at the functional 
\begin{equation}\label{J}
J_\la(v):=\frac 12 \int_{\O}|\nabla v|^2 dx-\la \int_\O F(v)dx\end{equation}
associated with \eqref{pfl} for $F(s)=\int _{t_0}^s f(t) dt$.
It is easy to see that $J_\la$ is coercive and $u_\la$ is the minimum. The presence of the positive constant $-\la$ in front of the potential term suggests that it is convenient for $u_\la$ to minimize it, i.e. to reach the value $t_0$ even if this increases the kinetic term which becomes infinite near the boundary. Indeed in the examples \ref{ese2} (case $ii)$ and \ref{ese} (case $i)$ we find that $J_\la(u_\la)=(C+o(1))\sqrt{-\la}$ as $\la\to -\infty$ and both the kinetic and the potential term gives a contribution of order $\sqrt{-\la}$.\\
This phenomenon has some similarities with the Ginzburg-Landau problem
\[J_\e(v):=\frac 12 \int_{\O}|\nabla v|^2 dx+\frac 1{\e^2} \int_\O F(v)dx\]
where  $f$ is a double well potential and the minimizers $u_\e$ are characterized by a phase transition among the two zeroes of the potential term, say $\pm1$. \\
In our case, obviously, there is not phase transition, since $f$ has the geometry of a single well and indeed Theorem \ref{casogen} says that $u_\la\to t_0   \chi_\O$ in $L^{\infty}_{\mrm{loc}}$. \\
Nevertheless, since in our case $u_\la=0$ on the boundary, we think that some of the characteristics of the double well potential should appear near $\partial \Omega$.\\
Since $t_0<v<0$ a simple observation and  the coarea formula gives
\[\begin{split}
J_\la(v)& \geq \sqrt {-\la}\int_\O \sqrt{2F(v)}|\nabla v| dx
=\sqrt {-\la}\int_{t_0}^0 \left( \int _{\O\cap\{v=s\}} \sqrt{2F(s)} d\mathcal{H}^{n-1}(y)\right)ds\\
&=\sqrt {-\la}\int_{t_0}^0  \sqrt{2F(s)} \mathcal{H}^{n-1}(\{v=s\})ds
\end{split}\]
where $ \mathcal{H}^{n-1}$ is the $n-1$-dimensional Hausdorff measure. Hence if $u_\la$  minimizes $J_\la$ it is natural to expect that, far from the boundary $u_\la \to t_0$, which is the unique  zero of the potential $F(v)$. It is likewise natural to expect that near the boundary the solution $u_\la$ should minimize $\mathcal{H}^{n-1}(\{v=s\})$ i.e. the level sets are of minimal perimeter among the ones that converges to $\partial \O$. \\
What it should be natural to expect is that $u_\la(x)=u_\la\left( d(x,\partial \Omega)\right)$ where $d(x,\partial \Omega)$ is the distance of the point $x$ from the boundary which is what happen for the double well potential.

Next aim is to improve Theorem \ref{casogen} computing a more detailed asymptotic behavior of the expansion \eqref{b1}. \\
Even if our analysis in this paper does not allow to obtain information near $\partial \O$ what we will see is that all solutions $v$ of the limit problems which arise in the refined study of $u_\la$ as $\la\to \infty$ have this nice geometrical property that near the boundary $v(x)=v\left( d(x,\partial \Omega)\right)$.\\
As expected the value of $t_0$ and the shape of $f$ will play a crucial role. For this reason we will state our results by separating the case in which $t_0$ is finite from that where $t_0=-\infty$.

\subsection{The case $t_0\in\R$}\

In this case from Theorem \ref{casogen} we have that the solution $u_\la\to t_0$ in $\O$. The aim of this section is to improve \eqref{b1} computing the additional terms of the expansion.\\
The model nonlinearity is $f(t)=\l((t-t_0)^+\r)^p$ with $p\ge0$ and $t_0$ negative.\\
As remarked before this problem was studied by many people as $\la>0$ and $p\ge1$. For $p>1$ we just recall \cite{bm} and the references therein and if $p=1$ we mention \cite{t,bb,bs,cf}. In this last paper it was also studied the asymptotic behavior of the solution as $\la\to+\infty$. In this case the region occupied by the plasma, namely $\{x\in\O\text{ such that }u_\la>-t_0\}$ has diameter converging to $0$. We will see that as $\la$ is negative the opposite phenomenon occurs. On the other hand, as $\la\to-\infty$, $p=1$ is a threshold for our problem where the behavior changes dramatically. In particular, if $p>1$ \emph{large solutions} $v$ appear in the expansion of the solution. Let us recall that $v$ is a large solution in $\O$ if it satisfies
\begin{equation}\label{large}
\l\{\begin{array}{ll}\D v=g(v)&\text{in }\O\\v(x)\us{x\to\pa\O}\to+\infty\end{array}\r.
\end{equation}
There is a massive literature about existence, uniqueness and asymptotic analysis of solutions $v$ to \eqref{large}, so it is impossible to give en exhaustive list of references. We just recall the seminal papers by Keller \cite{kel} and Osserman \cite{oss} where it was proved that if $g$ is a positive, continuous, non-decreasing function then \eqref{large} admits solutions if and only if the following \emph{Keller-Osserman} condition holds:
\begin{equation}\label{keller}
\int_{t_1}^{+\infty}\frac{\mrm dt}{\sqrt{\int_{t_1}^tg(s)\mrm ds}}<+\infty,
\end{equation}
for some $t_1>0$. The uniqueness of large solutions has been established under some additional assumptions on $f$ and the regularity of the domain $\O$ (see \cite{ddgr} for references and new results). Here we quote the result in \cite{bmJAM} where the authors proved the uniqueness of the large solution to \eqref{large} when $g(t)=t^p$ and $p>1$ and \cite{mv} in the case when $g(t)=e^t$.\\
Now we are in position to state our result.

\begin{teo}\label{finito}$ $\\
Let $u_\la$ be the unique solution to \eqref{pfl}. Assume there exists some $\g(\a)\us{\a\to0^+}\to0$ such that
\begin{equation}\label{g}
g_0(t)\le\frac{f(\a t+t_0)}{\g(\a)}\us{\a\to0^+}\to t^p,\q\text{with $p\ge0$ locally uniformly in }t>0,
\end{equation}
for some $g_0$ satisfying \eqref{keller}
Then the following alternative holds:
\begin{itemize}
\item[(i)] If $\frac{\g(\a)}\a\us{\a\to0^+}\to0$, then
$$u_\la=t_0+\a_\la\big(v+o(1)\big)\q\text{as }\la\to-\infty\ \text{in }C^1_{\mrm{loc}}(\O),$$
where $v$ is the unique \emph{large solution} to \eqref{large} with $g(t)=t^p$, for some $\a_\la\us{\la\to-\infty}\to0^+$.

\item[(ii)] If $\frac{\g(\a)}\a\us{\a\to0^+}{\not\to}0$ and in addition:
\begin{itemize}
\item either $\O$ is a ball,
\item or $\O\sub\R^2$ and/or $\O$ is strictly convex and
\begin{equation}\label{i2}
\frac{f(\a t+t_0)}{\g(\a)}\le Ct^q\hbox{ for some }C>0,0\le q\le1,t>0;
\end{equation}
\end{itemize}
then,
$$u_\la(x)=t_0+\a_\la\l(v\l(\frac{x-x_\la}{\e_\la}\r)+o(1)\r)\q\text{as }\la\to-\infty\ \text{in }C^1_{\mrm{loc}}(\R^N),$$
where $v$ is an entire solution to 
\begin{equation}\label{soluzione-intera}
\l\{\begin{array}{ll}\D v=v^p&\text{in }\R^N\\v\ge v(0)=1.\end{array}\r.,
\end{equation}
\end{itemize}
for some $\a_\la,\e_\la\us{\la\to-\infty}\to0$ and $x_\la$ being a minimum point of $u_\la$.
\end{teo}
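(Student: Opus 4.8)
The plan is a rescaling (blow-up) analysis. In both cases I write $u_\la=t_0+\a_\la w_\la$ with $\a_\la\to0^+$, so that $w_\la>0$ in $\O$ and, inserting into \eqref{pfl} and using \eqref{g} in the form $f(t_0+\a_\la w_\la)=\g(\a_\la)\big(w_\la^p+o(1)\big)$, one gets
$$-\D w_\la=\frac{\la\,\g(\a_\la)}{\a_\la}\big(w_\la^p+o(1)\big).$$
This suggests normalizing the coefficient to $1$, i.e. $\frac{(-\la)\g(\a_\la)}{\a_\la}=1$, which produces the limit equation $\D v=v^p$. The dichotomy in the statement is exactly the one deciding whether this normalization is compatible with $\a_\la\to0^+$: in case (i) the hypothesis $\g(\a)/\a\to0$ makes it so, no spatial rescaling is needed, and $w_\la$ lives on $\O$; in case (ii) it is not, so I instead set $\a_\la=\min_\O(u_\la-t_0)$, attained at the interior point $x_\la$ (hence $w_\la\ge w_\la(x_\la)=1$), and rescale space, looking at $w_\la(x_\la+\e_\la\,\cdot)$ with $\e_\la^2=\a_\la/\big((-\la)\g(\a_\la)\big)\to0$.

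The a priori bounds are supplied by the Keller--Osserman side of \eqref{g}. Since $f(\a t+t_0)\ge\g(\a)g_0(t)$, after normalization $w_\la$ is a subsolution of $\D w=g_0(w)$, and as $g_0$ satisfies \eqref{keller} the classical Osserman interior estimate gives $w_\la(x)\le\Phi\big(d(x,\pa\O)\big)$ with $\Phi$ independent of $\la$; this yields uniform $L^\infty_{\mrm{loc}}$ bounds away from $\pa\O$ (resp. on compact sets after rescaling in case (ii), where for non-radial $\O$ the at-most-linear growth \eqref{i2} provides the matching local bound). Standard elliptic estimates upgrade these to $C^1_{\mrm{loc}}$ bounds, and Arzel\`a--Ascoli extracts $w_\la\to v$ along a subsequence with $\D v=v^p$ on the limit domain.

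It remains to identify $v$. In case (i) the boundary datum of $w_\la$ is $|t_0|/\a_\la\to+\infty$, so a lower barrier near $\pa\O$ forces $v(x)\to+\infty$ as $x\to\pa\O$; thus $v$ solves \eqref{large} with $g(t)=t^p$, and the uniqueness of the large solution (\cite{bmJAM}) pins it down and promotes the subsequential convergence to convergence of the whole family. In case (ii) the normalization $w_\la(x_\la)=1$, $w_\la\ge1$ guarantees that the limit is a nontrivial entire solution with $v\ge v(0)=1$, i.e. a solution of \eqref{soluzione-intera}, provided the rescaled domains $(\O-x_\la)/\e_\la$ exhaust $\R^N$.

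The main obstacle is precisely this last proviso in case (ii): one must show $d(x_\la,\pa\O)/\e_\la\to+\infty$, i.e. that the minimum is attained well inside $\O$ on the blow-up scale, so that the blow-up does not interact with the boundary. This is where the extra hypotheses enter. When $\O$ is a ball, the moving-plane method applied to $-u_\la>0$ forces $x_\la$ to be the center, giving $d(x_\la,\pa\O)$ of order $1$. For convex or planar $\O$ one combines a barrier argument preventing concentration at $\pa\O$ with the growth bound \eqref{i2} ($q\le1$) to keep the entire limit from degenerating. Controlling the location of $x_\la$ and its interaction with $\pa\O$ is the crux of the whole argument.
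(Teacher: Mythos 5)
Your case~(i) is essentially the paper's own proof: the normalization $-\la=\a_\la/\g(\a_\la)$, the local upper bound coming from the Keller--Osserman minorant $g_0$ (your Osserman interior estimate is exactly the paper's comparison of $v_\la$ with the large solution of $\D v_0=g_0(v_0)$), a lower barrier with diverging boundary data, and uniqueness of the large solution from \cite{bmJAM}. The only point to make explicit there is that the lower barrier needs the uniform majorant $\wt g(t):=\sup_{\la<0}\frac{f(\a_\la t+t_0)}{\g(\a_\la)}$, which is finite thanks to the locally uniform convergence in \eqref{g}; without it your ``lower barrier'' has no equation to solve.

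The genuine gap is in case~(ii), precisely where you place your a priori bounds. You claim local compactness follows from ``the Keller--Osserman side of \eqref{g}'', but in case~(ii) that side of the hypothesis is vacuous, indeed contradictory. Since $g_0(t)\le t^p$ pointwise in the limit, condition \eqref{keller} for $g_0$ forces $p>1$ (if $p\le1$ then $\int_{t_1}^tg_0(s)\,\mrm ds\le Ct^2$ and the Keller--Osserman integral diverges); on the other hand, case~(ii) forces $p\le1$, because \eqref{g} gives $\g(\a s)/\g(\a)\to s^p$ for every $s>0$, so $\g$ is regularly varying of index $p$ at $0^+$ and $p>1$ would yield $\g(\a)/\a\to0$, i.e.\ case~(i) --- this is the dichotomy recorded just before Corollary \ref{i3}, and it is why the paper remarks that the lower bound by $g_0$ in \eqref{g} ``is needed only in case (i)''. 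Consistently, for $p>1$ Osserman's theorem would exclude any entire solution $v\ge1$ of $\D v=v^p$, so the very conclusion \eqref{soluzione-intera} presupposes that $t^p$ fails \eqref{keller}. In short: in case~(ii) the a priori bound cannot come from Osserman, and your proposal supplies no substitute.

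The substitute is the actual core of the paper's proof of~(ii). For planar or strictly convex $\O$, one writes the rescaled equation as $-\D v_\la+c(x)v_\la=0$ with $c(x)=\frac{f(\a_\la v_\la+t_0)}{\g(\a_\la)v_\la}$; by \eqref{i2} and the normalization $v_\la\ge v_\la(0)=1$ one has $0\le c(x)\le Cv_\la^{q-1}\le C$ (here $q\le1$ is essential), and the Harnack inequality on balls centered at the blow-up point gives $\sup_{B_R}v_\la\le C_H\inf_{B_R}v_\la=C_H$. For the ball, where \eqref{i2} is not assumed, $v_\la$ is radial and one compares the ODE \eqref{linear2} with the solution of $v_0''+\frac{N-1}rv_0'=g(v_0)+1$, $v_0(0)=1$, $v_0'(0)=0$, which is globally defined precisely because $g(t)=t^p$, $p\le1$, fails \eqref{keller} (Theorem 4 in \cite{kel}); the comparison is run up to levels $M_n$ on which the convergence in \eqref{g} is uniform. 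Your proposal replaces the first argument with the unexplained assertion that \eqref{i2} ``provides the matching local bound'' (the mechanism is Harnack, and it needs $v_\la\ge1$ and $q\le1$, not Keller--Osserman), and contains no compactness argument at all for the ball: the moving-plane observation correctly puts $x_\la$ at the center, but it does not bound $v_\la$ on compact sets of the blow-up scale. So the crux is not only the location of $x_\la$ (which Remark \ref{gnn} handles), but the compactness itself, and that is what is missing.
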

Even if the convergence in Theorem \ref{finito} does not allow to obtain information on the behavior of $u_\la$ near the boundary of $\O$, we observe here that the large solution to \eqref{large} satisfies 
\[ \lim_{x\to x_0} \frac{\psi(u(x))}{d(x,\partial \O)}=1,\]
where $x_0\in \partial \O$. Here $\psi$ is a function which depends only on the nonlinear term $g$ in \eqref{large}, see \cite{bmJAM}.

\begin{rem}\label{gnn}$ $\\
The assumption that $\O$ is planar or strictly convex is essential to have $x_\la\us{\la\to-\infty}\to x_0\in\O$, as in in the paper \cite{gnn} by Gidas, Ni and Nirenberg (see Corollary 3 and the Problem stated just below it). In the case of a ball, one does not even need to assume \eqref{i2}, essentially because all solutions are radial.\\
Notice that if $\O$ is a ball, then the solution of \eqref{soluzione-intera} is also radially symmetric. So it is uniquely determined as the solution to the O.D.E. 
$$\l\{\begin{array}{l}v''(r)+\frac{N-1}r v'(r)=v(r)^p\quad\hbox{in }\R\\v'(0)=0\\v(0)=1.\end{array}\r.$$
\end{rem}

\begin{rem}$ $\\
The assumption  $\frac{f(\a t+t_0)}{\g(\a)}\us{\a\to0^+}\to t^p$ in \eqref{g} is rather general and it is satisfied when the nonlinearity $f$ decay at zero at $t_0$ as a power or slowlier. Indeed it is equivalent to ask that
$$\frac{f(\a t+t_0)}{\g(\a)}\us{\a\to 0}\to g(t)\q\text{ for some $g$}.$$
See Lemma \ref{omog} for details. Observe that the condition that $\frac{f(\a t+t_0)}{\g(\a)}$ is bounded from below by $g_0$ in \eqref{g} is needed only in case $(i)$.\\
When, instead, the nonlinearity $f$ decay at zero faster, as in the case of 
$$f(t)=\l\{\begin{array}{ll}0&\text{ for }t<t_0\\e^{-\frac1{t-t_0}}&\text{ for }t_0<t<0\end{array}\r.$$
we still have that a \emph{large solution} appears in the expansion of $u_\la$. However we have to modify \eqref{g} assuming there exists $\a(\b)\ge0$ such that
\begin{equation}\label{g2}
\begin{split}
&\a(\b)\us{\b\searrow t_0}\to0,\q\q\frac{f(\b)}{\a(\b)}\us{\b\searrow t_0}\to0\\
&\text{and}\\
& g_0(t)\le\frac{f(\a(\b)t+\b)}{f(\b)}\us{\b\searrow t_0}\to g(t)\text{ locally uniformly for }t>-\sup_\b\frac{\b-t_0}{\a(\b)},
\end{split}
\end{equation}
for some $g_0$ satisfying \eqref{keller}, and we get $$u_\la(x)=\b_\la+\a_\la\big(v+o(1)\big)\q\text{as }\la\to-\infty\ \text{in }C^1_{\mrm{loc}}(\O),$$
where $v$ is the \emph{large solution} to \eqref{large}, corresponding to $g(t)$.
In this case an exponential function $g(t)$ can appear in the limit problem.
\end{rem}\

Due to the important role played by the nonlinearity $f(t)=\l((t-t_0)^+\r)^p$ we would like to state Theorem \ref{finito} expressly for this case. Note that $p>1$ corresponds to the case $(i)$ in Theorem \ref{finito} and $p\le1$ to $(ii)$.
\begin{cor}\label{i3}$ $\\
Let $\la<0$ and $u_\la$ be the unique negative solution to
$$\l\{\begin{array}{ll}-\D u=\la\l((u-t_0)^+\r)^p&\text{in }\O\\
u=0&\text{on }\pa\O.\end{array}\r.$$
Then the following alternative holds:
\begin{itemize}
\item[(i)]If $p>1$, then
$$u_\la=t_0+\frac{v+o(1)}{(-\la)^\frac1{p-1}}\q\text{as }\la\to-\infty\text{ in }C^\infty_{\mrm{loc}}(\O),$$
where $v$ is the unique positive solution to
$$\l\{\begin{array}{ll}\D v=v^p&\text{in }\O\\v(x)\us{x\to\pa\O}\to+\infty\end{array}\r.;$$

\item[(ii)]If $0\le p\le1$ and $\O$ is either planar or strictly convex, then setting $\a_\la=u_\la(x_\la)-t_0$ and $\e_\la=\sqrt{\frac{\a_\la^{1-p}}{-\la}}$ we have that
$$u_\la\l(\e_\la x+x_\la\r)=t_0+\a_\la\big(v+o(1)\big)\q\text{in }C^\infty_{\mrm{loc}}\l(\R^N\r),$$
where $v$ is a solution to \eqref{soluzione-intera}. When $\O$ is the unit ball instead
$$u_\la(r)=t_0+\a_\la v\l(\frac r{\e_\la}\r)$$
is the explicit solution to \eqref{pfl} if $\a_\la$ is such that $\a_\la v\l(\frac1{\e_\la}\r)=-t_0$.
\end{itemize}
\end{cor}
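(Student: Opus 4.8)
The plan is to specialize Theorem \ref{finito} to $f(t)=((t-t_0)^+)^p$ and to make its abstract scaling parameters explicit by substituting directly into the equation. First I would verify the structural hypothesis \eqref{g}. Since $t_0<0$ and $\a>0$, for every $t>0$ one has $f(\a t+t_0)=((\a t)^+)^p=\a^p t^p$, so the choice $\g(\a)=\a^p$ gives $f(\a t+t_0)/\g(\a)=t^p$ identically; the limit in \eqref{g} is thus exactly $t^p$ and one may take $g_0(t)=t^p$. The Keller--Osserman condition \eqref{keller} for $g_0(t)=t^p$ reduces to the convergence of $\int^{+\infty}t^{-(p+1)/2}\,dt$, which holds precisely for $p>1$. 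Moreover $\g(\a)/\a=\a^{p-1}$, so $\g(\a)/\a\to 0$ as $\a\to0^+$ exactly when $p>1$. This reproduces the dichotomy of Theorem \ref{finito}, matching the split $p>1$ (case (i)) versus $0\le p\le 1$ (case (ii)) in the statement; note also that for $p\le 1$ the lower bound $g_0$ is not required, consistently with Remark \ref{gnn}.

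For part (i) I would pin down $\a_\la$ as follows. Writing $u_\la=t_0+\a_\la w_\la$, since $u_\la>t_0$ by Theorem \ref{casogen} we have $(u_\la-t_0)^+=\a_\la w_\la$ with $w_\la>0$, and the equation becomes $-\a_\la\D w_\la=\la\a_\la^p w_\la^p$, i.e. $-\D w_\la=\la\a_\la^{p-1}w_\la^p$. The choice $\a_\la=(-\la)^{-1/(p-1)}$ makes $\la\a_\la^{p-1}=-1$, so that $w_\la$ solves $\D w_\la=w_\la^p$ in $\O$ exactly; since $u_\la=0$ on $\pa\O$ forces $w_\la=-t_0/\a_\la\to+\infty$ there, Theorem \ref{finito}(i) (its a priori bounds together with the uniqueness of the large solution from \cite{bmJAM}) yields $w_\la\to v$, the unique large solution of $\D v=v^p$. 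This is the claimed expansion; the improvement from $C^1_{\mrm{loc}}$ to $C^\infty_{\mrm{loc}}$ follows by elliptic bootstrap, as $w_\la$ is locally bounded below by a positive constant (it converges to the positive function $v$), so the right-hand side $w_\la^p$ is smooth.

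For part (ii) the hypothesis $0\le p\le 1$ places us in case (ii) of Theorem \ref{finito}; when $\O$ is planar or strictly convex I would check \eqref{i2} with $q=p$ and $C=1$, which is immediate since $f(\a t+t_0)/\g(\a)=t^p$. Setting $\a_\la=u_\la(x_\la)-t_0$ (so the rescaled function equals $1$ at its minimum, matching $v(0)=1$) and $w_\la(y)=(u_\la(\e_\la y+x_\la)-t_0)/\a_\la$, a direct computation gives $\D_y w_\la=-\la\,\e_\la^2\a_\la^{p-1}w_\la^p$; requiring the coefficient to be $1$ forces $\e_\la^2=\a_\la^{1-p}/(-\la)$, the stated scale. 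Theorem \ref{finito}(ii) then provides convergence to a solution of \eqref{soluzione-intera} in $C^\infty_{\mrm{loc}}(\R^N)$ (again by bootstrap, using $w_\la\ge 1$). In the ball case, uniqueness of $u_\la$ and rotational symmetry force $u_\la$ to be radial and the limit to be the radial solution of \eqref{soluzione-intera}; here $x_\la=0$ and the ansatz $u_\la(r)=t_0+\a_\la v(r/\e_\la)$ is an \emph{exact} solution: indeed $-\D u_\la=-\a_\la\e_\la^{-2}v^p$, which equals $\la\a_\la^p v^p$ precisely when $\e_\la^2=\a_\la^{1-p}/(-\la)$, while the boundary condition $u_\la(1)=0$ fixes $\a_\la$ through $\a_\la v(1/\e_\la)=-t_0$.

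Since the hard analytic work is carried by Theorem \ref{finito}, the main difficulty here is bookkeeping rather than analysis: verifying that $\a_\la=(-\la)^{-1/(p-1)}$ and $\e_\la=\sqrt{\a_\la^{1-p}/(-\la)}$ are exactly the normalizations that turn the rescaled equations into $\D v=v^p$ and \eqref{soluzione-intera}. The one genuinely delicate point is the boundary blow-up in part (i): one must confirm that the Dirichlet datum $u_\la=0$, after rescaling by $\a_\la\to0^+$, produces the divergence $w_\la\to+\infty$ on $\pa\O$ characterizing a large solution, which rests precisely on $-t_0>0$.
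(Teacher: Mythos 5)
Your proposal is correct and follows exactly the route the paper intends: Corollary \ref{i3} is stated as a direct specialization of Theorem \ref{finito} (with $\g(\a)=\a^p$, so that $\g(\a)/\a\to0$ iff $p>1$, matching case (i) versus case (ii)), and your explicit identifications $\a_\la=(-\la)^{-1/(p-1)}$ and $\e_\la=\sqrt{\a_\la^{1-p}/(-\la)}$ agree with the paper's normalizations $\la=-\a_\la/\g(\a_\la)$ and $\e_\la=\sqrt{\frac{\a_\la}{-\la\g(\a_\la)}}$, as does the verification of the exact radial solution in the ball. The one point the paper leaves implicit --- upgrading the $C^1_{\mrm{loc}}$ convergence of Theorem \ref{finito} to the $C^\infty_{\mrm{loc}}$ convergence claimed in the corollary --- you handle correctly by elliptic bootstrap, using that the rescaled functions are locally bounded away from zero so that $t\mapsto t^p$ is smooth on their range.
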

\begin{rem}$ $\\
Our result applies also to suitable perturbation of $\l((t-t_0)^+\r)^p$, namely $f(t)=\l((t-t_0)^+\r)^p+\l((t-t_0)^+\r)^q$ with $q>p>0$ or when $f$ is given by $(t-t_0)^p\log^2(t-t_0)$ for $t>t_0$. The expansion of $u_\la$ is the same as in $(i)$ or $(ii)$ of Corollary \ref{i3} and $g_0(t)=\l(t^+\r)^p$.\\
\end{rem}

It will be interesting to remove the monotonicity assumption on $f$ at least in the case of an asymptotic linear problem as in the paper \cite{mironescuradulescu}.

\subsection{The case $t_0=-\infty$}\

In this case Theorem \ref{finito} only says that $u_\la\to-\infty$ in $\O$. Our aim is to give a more precise expansion of $u_\la$ and we will see that a crucial role is played by the limit of $f(t)$ as $t\to-\infty$. Let us recall that $f$ is positive and increasing, so the only options are:
\begin{itemize}
\item $\lim_{t\to-\infty}f(t)=c_0>0$
\item $\lim_{t\to-\infty}f(t)=0$
\end{itemize}
Let us consider the first alternative. We have the following

\begin{teo}\label{b2}$ $\\
Let $u_\la$ be the unique negative solution to \eqref{pfl} with $f$ verifying \eqref{ipo} and
$$\lim_{t\to-\infty}f(t)=c_0>0.$$
Then we have that
$$u_\la=\la\big(c_0+o(1)\big)\phi\q\q\q\text{as }\la\to-\infty\text{ in }C^1_\mrm{loc}(\O)$$
where $\phi$ is the solution of the torsion problem 
\begin{equation}\label{tors}
\l\{\begin{array}{ll}-\D\phi=1&\text{in }\O\\\phi=0&\text{on }\pa\O.\end{array}\r.
\end{equation}
\end{teo}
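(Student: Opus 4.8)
The plan is to rescale by $\la$ and recognize the limit as the solution of a linear torsion problem. I set $w_\la:=u_\la/\la$; since $\la<0$ and, by the maximum principle, $u_\la<0$ in $\O$, we have $w_\la>0$, and $w_\la$ solves
\[
-\D w_\la=f(u_\la)\q\text{in }\O,\q\q w_\la=0\q\text{on }\pa\O.
\]
The decisive structural observation is that the right-hand side is \emph{uniformly} bounded: since $f$ is non-decreasing with $\lim_{t\to-\infty}f(t)=c_0$, its infimum is $c_0$, and since $u_\la<0$ we get $c_0\le f(u_\la(x))\le f(0)$ for every $x\in\O$ and every $\la<0$.

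From this uniform $L^\infty$ bound and the smoothness of $\O$, I would invoke global Calder\'on--Zygmund estimates for the Dirichlet Laplacian to obtain $\|w_\la\|_{W^{2,p}(\O)}\le C_p$ for every $p<\infty$, uniformly in $\la$. By Sobolev embedding $w_\la$ is then bounded in $C^{1,\a}(\ol\O)$, so along any sequence $\la_n\to-\infty$ a subsequence converges in $C^1(\ol\O)$ to some $w\in C^1(\ol\O)$. Crucially, uniform convergence \emph{up to the boundary} forces $w=0$ on $\pa\O$, because each $w_\la$ vanishes there.

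To identify $w$ in the interior I would use Theorem \ref{casogen}: $u_\la\to-\infty$ in $L^\infty_{\mrm{loc}}(\O)$, hence $f(u_\la)\to c_0$ in $L^\infty_{\mrm{loc}}(\O)$ since $\lim_{t\to-\infty}f=c_0$. Testing the equation for $w_\la$ against an arbitrary $\ph\in C_c^\infty(\O)$ and letting $\la\to-\infty$ gives $\int_\O\n w\cdot\n\ph=c_0\int_\O\ph$, that is $-\D w=c_0$ weakly in $\O$. Combined with $w\in C^1(\ol\O)$ and $w=0$ on $\pa\O$, uniqueness for the Dirichlet problem yields $w=c_0\phi$, with $\phi$ the torsion function of \eqref{tors}. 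As the limit is independent of the subsequence, the full family converges and $u_\la/\la\to c_0\phi$ in $C^1_{\mrm{loc}}(\O)$, which is the claim.

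The genuinely delicate point, and the main obstacle, is that the convergence $f(u_\la)\to c_0$ is only \emph{local}: in a shrinking boundary layer $u_\la$ stays close to $0$, so $f(u_\la)$ remains near $f(0)\ne c_0$ there, and the right-hand side does \emph{not} converge to $c_0$ uniformly on $\ol\O$. The point is that the uniform global $L^\infty$ bound $c_0\le f(u_\la)\le f(0)$ neutralizes this layer: it furnishes compactness up to $\pa\O$ and thereby pins down the boundary value $w=0$, while the interior equation $-\D w=c_0$ only needs the local convergence. Correctly separating these two roles of the right-hand side is the crux of the argument.
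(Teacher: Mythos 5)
Your proof is correct and follows essentially the same route as the paper's: rescale $v_\la=u_\la/\la$, use the uniform bound $c_0\le f(u_\la)\le f(0)$ to get elliptic compactness, and use the local convergence $f(u_\la)\to c_0$ (from Theorem \ref{casogen}, since here $t_0=-\infty$) to identify the limit as $c_0\phi$. The only difference is one of detail: where the paper invokes ``standard regularity theory'' in one line, you spell out the global $W^{2,p}$ bound, the subsequence extraction, and the role of up-to-the-boundary compactness in fixing the boundary value --- a fuller account of the same argument, not a different one.
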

The proof of the previous result is not difficult and it follows by the standard regularity theory. In the other case $\lim_{t\to-\infty}f(t)=0$ interesting new phenomena appear.\\

\begin{teo}\label{infinito}$ $\\
Let $u_\la$ be the unique solution to \eqref{pfl} with $f$ verifying \eqref{ipo} and 
$$\lim_{t\to-\infty}f(t)=0.$$
Assume there exists some $\a(\b),\g(\b)\ge0$ such that $\g(\b)\us{\b\to-\infty}\to0$ and
$$\frac{f(\a(\b)t+\b)}{\g(\b)}\us{\b\to-\infty}\to g(t)\text{ locally uniformly in }t\in\R$$
Then, the following alternative holds:
\begin{itemize}
\item[(i)] If $\frac\b{\a(\b)}\us{\b\to-\infty}\to-\infty$ and in addition $\frac{\g(\b)}{\a(\b)}\us{\b\to-\infty}\to0$ and $\frac{f(\a(\b)t+\b)}{\g(\b)}\ge g_0(t)$ for some $g_0$ satisfying \eqref{keller}, then
$$u_\la=\b_\la+\a_\la\big(v+o(1)\big)\q\q\q\text{as }\la\to-\infty\text{ in }C^1_{\mrm{loc}}(\O),$$
where $v$ is the \emph{large solution} to \eqref{large}, for some $\b_\la\us{\la\to-\infty}\to-\infty,\a_\la\us{\la\to-\infty}\to0$.\\

\item[(ii)] If $\frac\b{\a(\b)}\us{\b\to-\infty}\to A<0$, then
$$u_\la=\a_\la\big(v+o(1)\big)\q\q\q\text{as }\la\to-\infty\text{ in }C^1_{\mrm{loc}}(\O),$$
where $v$ is the unique (negative) solution to 
\begin{equation}\label{solsing}
\l\{\begin{array}{ll}\D v=g(v-A)&\text{in }\O\\v=0&\text{on }\pa\O
\end{array}\r.,
\end{equation}
for some $\a_\la\us{\la\to-\infty}\to+\infty$
\end{itemize}
\end{teo}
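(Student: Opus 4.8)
The plan is to treat both alternatives by a single \emph{amplitude} rescaling of $u_\la$, the only difference being the boundary behaviour of the limiting profile. From Theorem \ref{casogen} we already know $u_\la\to-\infty$ locally uniformly, so the real issue is to pin down the blow-down rate. I would \emph{define} the reference level $\b_\la\to-\infty$ by normalising the rescaled equation, i.e.\ by choosing $\b_\la$ with
\[-\la=\frac{\a(\b_\la)}{\g(\b_\la)};\]
this is solvable for $-\la$ large because $\g(\b)\to0$ together with either $\g(\b)/\a(\b)\to0$ (case (i)) or $\b/\a(\b)\to A<0$ (case (ii), which forces $\a(\b)\to+\infty$) makes $\a(\b)/\g(\b)\to+\infty$ as $\b\to-\infty$, so continuity lets me invert the ratio. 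Setting $\a_\la=\a(\b_\la)$, $\g_\la=\g(\b_\la)$ and, in case (i), $w_\la=\frac{u_\la-\b_\la}{\a_\la}$, a direct computation from \eqref{pfl} gives
\[\D w_\la=\frac{-\la}{\a_\la}\,f(\a_\la w_\la+\b_\la)=\frac{-\la\,\g_\la}{\a_\la}\cdot\frac{f(\a_\la w_\la+\b_\la)}{\g_\la}=\frac{f(\a_\la w_\la+\b_\la)}{\g_\la}\us{\la\to-\infty}\to g(w_\la),\]
using $-\la\,\g_\la=\a_\la$ and the hypothesis on $f$. In case (ii) I would rescale \emph{without} shift, $\wt v_\la=u_\la/\a_\la$, and use $\b_\la\approx A\a_\la$ to rewrite $f(\a_\la\wt v_\la)=f\big(\a_\la(\wt v_\la-A)+\a_\la A\big)\approx\g_\la\,g(\wt v_\la-A)$, obtaining $\D\wt v_\la\to g(\wt v_\la-A)$.

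The heart of the argument is the a priori control of the rescaled family, which is also what selects $\b_\la$. In case (ii) this is cheap: taking $\b_\la=\min_\O u_\la$ one has $\b_\la\le u_\la\le0$, hence $A+o(1)\le\wt v_\la\le0$, so the family is uniformly bounded and, by the equation, has locally bounded right-hand side. In case (i) the profile must be unbounded near $\pa\O$, and here I would invoke the Keller--Osserman mechanism: the lower bound $f(\a_\la t+\b_\la)/\g_\la\ge g_0(t)$ yields $\D w_\la\ge(1+o(1))\,g_0(w_\la)$, and since $g_0$ satisfies \eqref{keller} the classical interior estimate produces a bound $w_\la(x)\le\Psi(\mrm{dist}(x,\pa\O))$ depending only on $g_0$, giving the upper bound on every compact subset of $\O$. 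For the matching lower bound, and to force $\min_\O u_\la$ to the level $\b_\la$, I would build sub- and supersolutions of \eqref{pfl} of the form $\b_\la+\a_\la v_\pm$, where $v_\pm$ are large solutions of $\D v=g(v)$ on slightly dilated, resp.\ contracted, copies $\O_-\Sub\O\Sub\O_+$: plugging in and using $-\la\,\g_\la=\a_\la$ together with $g_0\le f(\a_\la\,\cdot+\b_\la)/\g_\la\to g$ makes the defect keep a definite sign on each side, so the comparison principle for the \emph{non-increasing} nonlinearity $u\mapsto\la f(u)$ (the same monotonicity underlying uniqueness in Theorem \ref{casogen}) traps $u_\la$ between the two barriers and bounds $w_\la$ from below on compacts.

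With two-sided bounds in hand, elliptic regularity ($W^{2,p}$ followed by Schauder, using that $f$ is $C^1$ on the relevant range and that the right-hand sides are locally uniformly bounded) yields $C^1_{\mrm{loc}}(\O)$ compactness, so along subsequences $w_\la\to w$ (resp.\ $\wt v_\la\to v$) with $\D w=g(w)$ (resp.\ $\D v=g(v-A)$). It then remains to read off the boundary datum. In case (i), since $u_\la=0$ on $\pa\O$ while $\b_\la/\a_\la=\b_\la/\a(\b_\la)\to-\infty$, we get $w_\la=-\b_\la/\a_\la\to+\infty$ on $\pa\O$, and together with the Keller--Osserman barrier this forces $w(x)\to+\infty$ as $x\to\pa\O$, so $w$ is a large solution of \eqref{large}. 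In case (ii), $\wt v_\la=0$ on $\pa\O$ directly, giving the homogeneous Dirichlet condition, so $v$ solves \eqref{solsing}, and subharmonicity ($g\ge0$) gives $v\le0$. Finally I would invoke uniqueness to identify the limit and to upgrade subsequential to full convergence: for \eqref{large} the uniqueness of the large solution (as in \cite{bmJAM} for $g(t)=t^p$ and \cite{mv} for $g(t)=e^t$), and for \eqref{solsing} the uniqueness coming again from the monotone nonlinearity $v\mapsto-g(v-A)$.

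I expect the main obstacle to be the a priori lower bound and the simultaneous selection of $\b_\la$ in case (i). The comparison functions must be genuine sub/supersolutions of \eqref{pfl} even though $f$ is controlled only asymptotically through $\a,\g,g$, so the defects are of lower order and must keep a definite sign \emph{uniformly up to, but not on,} $\pa\O$; and one must exclude that $\min_\O u_\la$ settles at a level where the ratio $\a(\b)/\g(\b)$ has a different growth, which is exactly what the hypotheses $\g/\a\to0$ and $g_0\le f/\g\to g$ are designed to prevent. The remaining, more standard, points (invertibility of the normalising relation, passage to the limit in the nonlinearity via the locally uniform convergence in \eqref{g2}-type hypotheses, and the subharmonicity sign of $v$) I expect to be routine.
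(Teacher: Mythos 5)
Your global strategy is the paper's own (normalise $\b_\la$ via $-\la=\a(\b_\la)/\g(\b_\la)$, rescale, get two-sided bounds by comparison, pass to the limit, identify it by uniqueness), but two steps do not survive scrutiny. The most serious one is in case (ii), where you define $\b_\la$ twice in incompatible ways. The convergence of the rescaled equation,
\begin{equation*}
\D\wt v_\la=\frac{-\la\,\g(\b_\la)}{\a(\b_\la)}\cdot\frac{f(\a(\b_\la)\wt v_\la)}{\g(\b_\la)}\to g(\wt v_\la-A),
\end{equation*}
requires the normalisation $-\la\,\g(\b_\la)=\a(\b_\la)$, while your ``cheap'' bound $A+o(1)\le\wt v_\la\le0$ requires $\b_\la=\min_\O u_\la$. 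You cannot have both for free: with the normalisation choice, the inequality $u_\la\ge\b_\la\big(1+o(1)\big)$ is exactly the a priori estimate to be proven (that the minimum sits at the normalisation level), and with the choice $\b_\la=\min_\O u_\la$ the factor $\mu_\la:=\frac{-\la\g(\b_\la)}{\a(\b_\la)}$ is uncontrolled and the limit equation is lost. You flag this tension in your last paragraph but offer no argument; this is precisely the crux. The paper settles it by comparing $\wt v_\la$ (normalisation choice) with the solution $\wt v$ of the \emph{singular} Dirichlet problem $\D\wt v=\wt g(\wt v)$, $\wt v=0$ on $\pa\O$, with $\wt g(t):=\sup_\la\frac{f(\a(\b_\la)t)}{\g(\b_\la)}$, whose existence and boundedness is nontrivial because $\wt g$ blows up as $t\to0^-$; this is quoted from \cite{crt}. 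Related to the same point, your claim that the two-sided bound makes the right-hand side locally bounded is false: the limiting nonlinearity $g(t-A)$ is of the type $(-t)^{-p}$, singular at $t=0$, so one also needs $\wt v_\la\le-\d_K<0$ on compact subsets (which does follow by comparison with a multiple of the torsion function, but only once the lower bound is available); your proposal never addresses this singularity at all.

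In case (i) the identification of the boundary condition is argued backwards. From $w_\la=-\b_\la/\a_\la\to+\infty$ on $\pa\O$ and the Keller--Osserman \emph{upper} barrier you cannot conclude $w(x)\to+\infty$ as $x\to\pa\O$: since $\D w_\la\ge0$, large boundary values do not propagate inward (that non-propagation is the very content of the interior Keller--Osserman estimate), and an upper barrier can only cap $w$, never push it up. What is needed is a \emph{lower} barrier diverging at $\pa\O$; the paper builds it by comparing $w_\la$ with the solutions $\wt v_\la$ of $\D\wt v_\la=\wt g(\wt v_\la)$ with boundary datum $-\b_\la/\a_\la\to+\infty$, which increase to the large solution of the $\wt g$-problem. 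Your subsolutions $\b_\la+\a_\la v_+$, with $v_+$ the large solution on a dilated domain $\O_+\Bus\ol\O$, could play this role after letting $\O_+\downarrow\O$, but for each fixed $\O_+$ the function $v_+$ is bounded on $\ol\O$ and hence gives no boundary information; moreover, as you yourself note, these are only \emph{approximate} sub/supersolutions, since the hypotheses give $f/\g\ge g_0$ and $f/\g\to g$ but neither $f/\g\le g$ nor $f/\g\ge g$ pointwise. That defect is removable --- e.g.\ replace the barrier by $\b_\la+\a_\la\l(v_+-\e_\la\phi\r)$, with $\phi$ the torsion function \eqref{tors} and $\e_\la$ the locally uniform error, noting that lowering a subsolution preserves the inequality because $f$ is non-decreasing and $\la<0$ --- but as written the key comparison, and with it both the interior lower bound and the boundary blow-up, remains a plan rather than a proof. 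The paper sidesteps this entirely by comparing against $\wt g:=\sup_\la f(\a_\la\cdot+\b_\la)/\g(\b_\la)$, which dominates $f/\g$ pointwise by construction, so that all comparisons are exact.
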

Here we observe that also in the case $(ii)$ the solution $v$ to \eqref{solsing} satisfies 
\[\lim_{x\to x_0} \left| v(x)-\psi(d(x,\partial \O))\right|\to 0\]
if $\psi$ is a function which depends only on the nonlinear term $g$, see \cite{bgp} as an example.
\begin{rem}$ $\\
With respect to Theorem \ref{finito}, the statement of Theorem \ref{infinito} has some differences, also because in this case $\b$ cannot be fixed to $t_0$, as the latter equals $+\infty$. Anyway, some simplifications still occur in case $(ii)$.\\
In fact, the limit function $g(t)$ is always a negative power of the type $\frac1{(-t)^p}$ for some $p\ge0$ (see Lemma \ref{omog} for details), and in the case $p=0$ we recover the case of Theorem \ref{b2}. On the other hand, in case $(i)$ other function such as exponentials appear, as explained later on.\\

Moreover, it is not hard to see that one can take $\g(\b)=f(\b)$ (see again Lemma \ref{omog}).\\
Finally, local uniform convergence for $\frac{f(\a(\b)t+\b)}{\g(\b)}$ can actually be assumed only for $t$ for which $\a(\b)t+\b$ is negative (as we evaluate $f$ on $u_\la$ which attains negative values), namely $t<\sup_\b\frac{-\b}{\a(\b)}$.
\end{rem}\

The model nonlinearity of the case $(i)$ in the previous theorem is $f(t)=e^t$. Due to its importance, it seems useful to state explicitly the result.

\begin{cor}$ $\\
Let $\la<0$ and $u_\la$ a family of negative solutions to
$$\l\{\begin{array}{ll}-\D u=\la e^u&\text{in }\O
\\u=0&\text{on }\pa\O.\end{array}\r.$$
Then 
$$u_\la=-\log(-\la)+v+o(1)\text{ in }C^\infty_{\mrm{loc}}(\O)\text{ as }\la\to-\infty$$
and $v$ is the unique positive solution to
\begin{equation}\label{Rob}
\l\{\begin{array}{ll}\D v=e^v&\text{in }\O\\v(x)\us{x\to\pa\O}\to+\infty.\end{array}\r.
\end{equation}
\end{cor}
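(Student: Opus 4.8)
The plan is to exploit the special algebraic structure of the exponential, which allows the parameter $\la$ to be absorbed into a translation. Setting $\b_\la:=-\log(-\la)$ and $v_\la:=u_\la-\b_\la=u_\la+\log(-\la)$, a direct computation gives
\[\D v_\la=\D u_\la=-\la\, e^{u_\la}=-\la\, e^{\b_\la}e^{v_\la}=e^{v_\la}\q\text{in }\O,\]
because $-\la\, e^{\b_\la}=(-\la)\cdot\frac1{-\la}=1$, while on $\pa\O$ one has $v_\la=-\b_\la=\log(-\la)$. Thus $v_\la$ is exactly the solution of $\D v=e^v$ in $\O$ carrying the \emph{constant} boundary datum $\log(-\la)$, and the whole statement reduces to proving that, as this datum diverges, the $v_\la$ converge in $C^\infty_{\mrm{loc}}(\O)$ to the unique large solution of \eqref{Rob}.

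For the convergence I would argue by monotonicity and comparison. The boundary datum $\log(-\la)$ increases as $\la\to-\infty$, so the comparison principle for $\D v=e^v$ (whose right-hand side is increasing in $v$) shows that $\la\map v_\la$ is monotone increasing as $\la\to-\infty$. On the other hand, comparing $v_\la$, which has a finite boundary datum, with the large solution $v$ of \eqref{Rob}, which equals $+\infty$ on $\pa\O$, the same comparison principle (carried out on the shrunk domains $\{d(x,\pa\O)>\d\}$ and letting $\d\to0$) yields $v_\la\le v$ throughout $\O$. Hence $v_\la$ increases and is locally bounded above, so it converges to some $\ol v\le v$; by interior elliptic estimates and the smoothness of $t\map e^t$ the convergence is in $C^\infty_{\mrm{loc}}(\O)$ and $\ol v$ solves $\D\ol v=e^{\ol v}$ in $\O$.

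It remains to identify $\ol v$ with the large solution. Since $v_\la$ is precisely the solution with constant boundary datum $\log(-\la)$, the increasing family $\{v_\la\}$ has the same limit as the family of solutions with boundary data $M\to+\infty$, which is by definition the minimal large solution of \eqref{Rob}; the uniqueness of the large solution for $g(t)=e^t$ established in \cite{mv} then forces $\ol v$ to be the unique positive solution $v$ of \eqref{Rob}. Translating back gives $u_\la=-\log(-\la)+v_\la=-\log(-\la)+v+o(1)$ in $C^\infty_{\mrm{loc}}(\O)$, as claimed. I expect the delicate point to be exactly the identification of $\ol v$ as a \emph{large} solution, i.e. showing that the limit blows up at $\pa\O$ rather than stabilizing at a finite boundary profile; this is where the Keller--Osserman condition \eqref{keller} for $g_0(t)=e^t$ enters (note that $\int^{+\infty}e^{-t/2}\,\mrm dt<+\infty$), guaranteeing both the existence of the upper barrier $v$ used above and the a priori interior estimates that make the monotone limiting procedure legitimate.

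Finally, this is the instance of Theorem \ref{infinito}$(i)$ corresponding to the choice $\a(\b)\eq1$, $\g(\b)=f(\b)=e^\b$ and $g(t)=e^t$: one checks at once that $\g(\b)\us{\b\to-\infty}\to0$, $\frac\b{\a(\b)}=\b\us{\b\to-\infty}\to-\infty$, $\frac{\g(\b)}{\a(\b)}=e^\b\us{\b\to-\infty}\to0$, and that $g_0(t)=e^t$ satisfies \eqref{keller}, so the general theorem applies and recovers $\b_\la=-\log(-\la)$ from the balance $-\la\, e^{\b_\la}\to1$.
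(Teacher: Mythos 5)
Your proof is correct, and in substance it follows the paper's route: the paper obtains this corollary purely as the special case of Theorem \ref{infinito}(i) with $\a(\b)\eq1$, $\g(\b)=f(\b)=e^\b$, $g(t)=e^t$ and $\b_\la=-\log(-\la)$, which is exactly the dictionary you give in your last paragraph. What you do differently is to exploit the fact that for the exponential the reduction is \emph{exact}: $v_\la=u_\la+\log(-\la)$ solves $\D v_\la=e^{v_\la}$ with constant boundary datum $\log(-\la)$ for every $\la$, not merely a rescaled equation whose right-hand side converges to $e^t$. This buys you monotonicity of $\la\map v_\la$ as $\la\to-\infty$ (unavailable for general $f$, which is why the paper instead sandwiches $v_\la$ between a bounded solution $\wt v$ and the large solution), so compactness reduces to interior elliptic estimates for an increasing, locally bounded family. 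Two refinements to your write-up. First, the point you flag as delicate --- that the monotone limit $\ol v$ blows up at $\pa\O$ --- is actually immediate from your own setup: $\ol v\ge v_\la$ in $\O$ for every $\la$, and $v_\la$ is continuous up to $\pa\O$ with $v_\la|_{\pa\O}=\log(-\la)$, hence $\liminf_{x\to\pa\O}\ol v(x)\ge\log(-\la)$ for every $\la$; no Keller--Osserman input is needed there (it \emph{is} needed, as you say, for the existence of the upper barrier $v$, hence for the local upper bound). This is the same mechanism the paper uses through the auxiliary solutions $\wt v_\la$ with diverging boundary data. Second, in the comparison on the shrunk domains $\O_\d=\{d(x,\pa\O)>\d\}$ you should make explicit that $v_\la\le\log(-\la)$ throughout $\O$ (since $\D v_\la=e^{v_\la}>0$, $v_\la$ is subharmonic and attains its maximum on $\pa\O$), and then choose $\d$ so small that $v\ge\log(-\la)$ on $\pa\O_\d$; only then does the comparison principle give $v_\la\le v$ on $\O_\d$, and letting $\d\to0$ concludes.
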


\begin{rem}$ $\\
If $\O\subset\R^2$ is a simply connected domain then the solution $v$  to \eqref{Rob} satisfies
$$v(x)=2R(x)+\log8$$
where $R$ is the Robin function associated to $\O$, namely the regular part of the Green function computed on the diagonal. Our result, jointly with Suzuki's one (see \cite{s}), gives a complete description for any $\la\in\R$ of the bifurcation diagram containing the minimal branch of \eqref{pfl} with $f(t)=e^t$.\\
Note that our results does not depend on the dimension of the space, differently from the case where $\la>0$ (see \cite{jl} for example).
\end{rem}

\begin{rem}$ $\\
Some example of nonlinearity $f$ where the previous theorem applies are the following:
\begin{itemize}
\item $f(t)=e^{t|t|^{p-1}}$ with $\a_\la=\frac1{p(-\b_\la)^{p-1}}$ and $\b_\la$ verifying $(-\b_\la)^{p-1}e^{\b_\la(-\b_\la)^{p-1}}=-\frac1{p\la}$.\\
So we get $u_\la=\frac{v_\la}{(-\b_\la)^{p-1}}+\b_\la$.
\item $f(t)=(1+|t|)^pe^t$ with $\a_\la=1$ and $\b_\la$ verifying $(-\b_\la)^pe^{\b_\la}=-\frac1\la$.\\
So we get $u_\la=v_\la+\b_\la$.
\end{itemize}
\end{rem}

The paper is organized as follows: in Section \ref{s1} we prove Theorem \ref{casogen}. In Section \ref{s2} we discuss the case $t_0\in\R$ and prove Theorem \ref{finito} and in Section \ref{s3} we prove Theorem \ref{b2} and \ref{infinito}. At the end of the both sections we give some examples where explicity solutions are provided. Finally in the Appendix we show that our assumptions on the nonlinearity $f$ are quite general.\\

\section{General properties of the solution $u_\la$}\label{s1}\

In this section we prove Theorem \ref{casogen}. We start showing some properties of the solution $u_\la$.
\begin{lemma}\label{esiuni}$ $\\
Assume $f$ satisfies \eqref{ipo}. Then for any $\la<0$, \eqref{pfl} has a unique classical solution which is strictly negative in $\O$.
\end{lemma}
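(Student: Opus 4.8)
The plan is to prove existence, uniqueness, and strict negativity separately, since each requires a different tool. The central structural fact I would exploit is that $\la<0$ and $f\ge0$ make the problem \emph{monotone}: the nonlinearity $\la f(u)$ is non-increasing in $u$ because $f$ is non-decreasing and the coefficient $\la$ is negative. This is precisely the sign condition under which the operator $u\mapsto-\D u-\la f(u)$ is monotone, so uniqueness should come essentially for free, and existence should follow from a monotone iteration (sub/supersolution) scheme.

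For \textbf{existence}, I would set up the standard monotone iteration. A natural supersolution is $\ol u\eq0$, since $-\D\ol u=0\ge\la f(0)$ holds because $\la<0$ and $f(0)>0$. For a subsolution I would use $\ul u=\la C\phi$ where $\phi$ solves the torsion problem \eqref{tors} and $C>0$ is a constant bounding $f$ from above on the relevant range; more carefully, because $f$ need not be globally bounded, I would first produce an a priori lower bound $u\ge t_0$ (see strict negativity below) and use that $f$ is bounded on $[t_0,0]$, so one can take $C=\sup_{[t_0,0]}f$. Then $\ul u\le0=\ol u$, and Picard/monotone iteration starting from $\ol u$ produces a decreasing sequence of iterates trapped between $\ul u$ and $\ol u$ that converges, by elliptic regularity and the $C^1$ regularity of $f$ on the interval where $0<f<f(0)$, to a classical solution $u_\la$.

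For \textbf{uniqueness}, suppose $u_1,u_2$ are two solutions and subtract the equations: $-\D(u_1-u_2)=\la\big(f(u_1)-f(u_2)\big)$. Testing against $(u_1-u_2)^+$ and using that on the set $\{u_1>u_2\}$ one has $f(u_1)\ge f(u_2)$ (monotonicity of $f$) while $\la<0$, the right-hand side contributes a non-positive term, forcing $\int_\O|\n(u_1-u_2)^+|^2\le0$, hence $u_1\le u_2$; by symmetry $u_1=u_2$. This is the cleanest argument and avoids any convexity or spectral hypothesis. For \textbf{strict negativity and the bound $t_0<u_\la<0$}, the upper bound $u_\la<0$ is immediate from the maximum principle since $-\D u_\la=\la f(u_\la)\le0$ with zero boundary data, and strict negativity in the interior follows from the strong maximum principle (as $u_\la\not\eq0$, because $\la f(0)<0$ near the boundary). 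The lower bound $u_\la>t_0$ is where I expect the main subtlety: on the set where $u_\la\le t_0$ one has $f(u_\la)=0$, so $u_\la$ is harmonic there, and a comparison/connectedness argument using the definition of $t_0$ in \eqref{t0} shows the level $t_0$ cannot be attained in the interior.

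The step I anticipate as the \textbf{main obstacle} is reconciling the iteration with the limited regularity of $f$: assumption \eqref{ipo} only gives $C^1$ regularity on $\{0<f<f(0)\}$, and $f$ may be merely continuous (or have a corner) at $t_0$ and is only non-decreasing globally, so I must be careful that the monotone scheme stays in the region where the needed regularity holds and that the limit is a genuine classical solution rather than merely a weak one. Concretely, the a priori trapping $t_0\le u_\la\le0$ must be established \emph{before} invoking smoothness, so that $f(u_\la)$ ranges only over $[0,f(0)]$ where $f$ is controlled; once that is in hand, bootstrapping via Schauder estimates upgrades the solution to $C^2(\O)\cap C(\ol\O)$ and closes the argument.
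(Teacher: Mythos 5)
Your uniqueness argument (testing the difference of two solutions against $(u_1-u_2)^+$, using that $f$ is non-decreasing and $\la<0$) and your derivation of strict negativity from the weak and strong maximum principles are correct, and coincide with the paper's. The genuine gap is in the existence step. Having correctly observed that $u\mapsto\la f(u)$ is non-\emph{increasing}, you then claim that Picard/monotone iteration from $\ol u\eq0$ ``produces a decreasing sequence of iterates''. That is exactly what this sign of monotonicity forbids: with $u_0=0$ and $-\D u_{n+1}=\la f(u_n)$, $u_{n+1}=0$ on $\pa\O$, one has $u_1\le u_0$, but then $f(u_1)\le f(u_0)$ and $\la<0$ give $-\D(u_2-u_1)=\la\big(f(u_1)-f(u_0)\big)\ge0$, i.e.\ $u_2\ge u_1$. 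The iterates alternate: even iterates decrease, odd ones increase, every odd iterate lies below every even one, and the two monotone limits $v\le w$ only solve the coupled system $-\D v=\la f(w)$, $-\D w=\la f(v)$. Nothing in your argument excludes a genuine $2$-cycle $v<w$ (for order-reversing maps such cycles do occur, as in the scalar iteration $x_{n+1}=g(x_n)$ with $g$ decreasing), so convergence of the scheme to a solution is not established. The standard repair, iterating $-\D u_{n+1}+Ku_{n+1}=\la f(u_n)+Ku_n$ with $K$ chosen so that $t\mapsto\la f(t)+Kt$ is non-decreasing, is unavailable under \eqref{ipo}: it requires a one-sided Lipschitz bound on $f$, and for the model case $f(t)=\l((t-t_0)^+\r)^p$ with $0<p<1$ (covered by Corollary \ref{i3}(ii)) no finite $K$ works, since $f'(t)\to+\infty$ as $t\searrow t_0$. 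Your proposed remedy, trapping the solution in $[t_0,0]$ where $f$ is bounded, does not address this: boundedness is not Lipschitz continuity, and the bound $u\ge t_0$ is in any case a property of the not-yet-constructed solution, so invoking it during the construction is circular (it is also unnecessary for the subsolution: $f\le f(0)$ on $(-\infty,0]$ by monotonicity, so $\ul u=\la f(0)\phi$, with $\phi$ the torsion function of \eqref{tors}, is always a subsolution).

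The sub/supersolution skeleton can be completed, but by a different mechanism than iteration: either truncate the nonlinearity at $\ul u$ and $\ol u$ and apply the Schauder fixed point theorem to the truncated problem (the a posteriori bounds $\ul u\le u\le\ol u$ then follow from the truncation alone, with no monotonicity or Lipschitz requirement), or minimize the functional $J_\la$ of \eqref{J}, whose coercivity is noted in the Introduction. The paper itself takes a third route: continuation from $\la=0$ via the implicit function theorem, using that for $\la_0<0$ and $f'\ge0$ the linearized operator $v\mapsto\D v+\la_0f'(u_0)v$ has negative definite quadratic form, hence is injective, together with a compactness argument showing that the branch cannot terminate at a finite $\la^*$. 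Any of these would close the existence part; as written, yours does not.
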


\begin{proof}$ $\\
As in \cite{cr} existence for small $\la$ can be proved applying the implicit function Theorem to $F(\la,u):(-\infty,0]\x C^{2,\a}(\O)\cap C_0(\O)\to C^{0,\a}(\O)$ defined as
$$F(\la,u)=\D u+\la f(u)$$
at its trivial zero $(\la_0,u_0)=(0,0)$, as the linearized operator $F'(0,0):v\mapsto\D v$ is invertible. Here $C^{2,\a}(\O), C^{0,\a}(\O)$ are the usual Holder spaces and $C_0(\O)$ is the subspace of the continuous functions on $\O$ that satisfy $u=0$ on $\pa\O$. As $\la<0$ then $u_\la<0$ by the weak and strong maximum principle.\\
More generally, the branch of solutions can be extended at any $(\la_0,u_0)$ with $\la_0<0$; in fact, the linearized operator is
$$F'(\la_0,u_0):v\mapsto\D v+\la_0f'(u_0)v;$$
therefore, being $\la_0<0$ and $f'(u_0)\ge0$, for any $v\not\eq0$ one has
$$\int_\O(F'(\la_0,u_0)v)v=\int_\O(\D v+\la_0f'(u_0)v)v=\int_\O\l(-|\n v|^2+\la_0f'(u_0)v^2\r)<0.$$
This proves the injectivity of $F'(\la_0,u_0)$ and hence that the branch of solutions is a regular curve in a neighborhood of any of its point.\\

Finally, let us show that such a branch exists for every $\la\le0$. Set $I_\la:=\{\la<0\text{ such that }F(\la,u_\la)=0\}$ and $\la^*=\inf I_\la$.\\
We want to show that if $\la^*\in\R$ then $\la^*$ is a minimum for $I_\la$ which contradicts the definition of $\la^*$ since we have already proved that the branch of solutions can be extended from any of its point.\\
By definition there exists a sequence $(\la_n,u_n)$ with $\la_n\to\la^*$ such that $F(\la_n,u_n)=0$. By the maximum principle, any solution $u$ to \eqref{pfl} is not positive, therefore $f(u_n)\le f(0)$ and $(\la^*-\d)f(0)\le-\D u_n\le0$ for some $\d>0$ when $n$ is large enough; hence, by standard elliptic estimates, $u_n$ converges in $C^{2,\a}(\O)$ to some $u^*\in C_0(\O)$ satisfying $F(\la^*,u^*)=0$. This proves existence for any $\la$.\\

To show the uniqueness of the solution for $\la<0$, take two solutions $u,v$ to \eqref{pfl} and consider their difference $u-v$: it solves
$$\l\{\begin{array}{ll}-\D(u-v)=\la(f(u)-f(v))&\text{in }\O\\u-v=0&\text{on }\pa\O\end{array}\r..$$
By testing this equation versus $u-v$, we get
$$\int_\O|\n(u-v)|^2=\int_\O(-\D(u-v))(u-v)=\la\int_\O(f(u)-f(v))(u-v).$$
Since we are taking a non-decreasing $f$, we have $(f(u)-f(v))(u-v)\ge0$, therefore we get $\int_\O|\n(u-v)|^2\le0$, which is possible only if $u\eq v$. This proves the uniqueness and concludes the proof.
\end{proof}\

Next lemma is a comparison principle which is well known as $\la\ge0$. On the other hand the same proof holds for $\la<0$ as well.

\begin{lemma}\label{confr}$ $\\
Let $\O_1\sub\O_2$ and  $u_1,u_2$ be solutions to
$$-\D u_i=g_i(u_i) \ \ \text{in }\O_i ,\q\q\q\q\q\q i=1,2,$$
such that $u_2(x)\le u_1(x)$ on $\pa\O_1$, where $g_1,g_2$ are locally Lipschitz functions, $g_1$ nonincreasing, $g_2(t)\le g_1(t)$ for any $t$. Then,
$$u_2(x)\le u_1(x),\q\q\q\q\q\q\fa x\in\O_1.$$
Moreover, either one has $\O_1=\O_2$, $g_1\eq g_2$, $u_2(x)= u_1(x)$ on $\pa\O_1$, or $u_2(x)<u_1(x)$ for any $x\in\O_1$.
\end{lemma}

\begin{proof}$ $\\
The difference $u_1-u_2$ solves
$$\l\{\begin{array}{ll}-\D(u_1-u_2)=g_1(u_1)-g_2(u_2)&\text{in }\O_1\\u_1-u_2\ge0&\text{on }\pa\O_1\end{array}\r..$$
By writing 
$$g_1(u_1)-g_2(u_2)=\frac{g_1(u_1)-g_1(u_2)}{u_1-u_2}(u_1-u_2)+g_1(u_2)-g_2(u_2),$$
since $g_1(u_2)\ge g_2(u_2)$, then $u_1-u_2$ also satisfies $-\D(u_1-u_2)+c(x)(u_1-u_2)\ge0$ with $c(x)=-\frac{g_1(u_1)-g_1(u_2)}{u_1-u_2}\ge0$ by the monotonicity of $g_1$. Therefore the weak and strong maximum principle gives $u_1-u_2\ge0$, with the strict inequality unless $\O_i, g_i,{u_i}_{|\pa\O_i}$ all coincide.
\end{proof}

\begin{proof}[Proof of Theorem \ref{casogen}]$ $\\
Existence and uniqueness of a negative solution follows from Lemma \ref{esiuni}.\\
The stability of the solution $u_\la$ is an easy consequence of the fact that $\la$ is negative and $f$ non-decreasing.
Moreover, applying Lemma \ref{confr} with $\O_1=\O_2=\O$ and $g_i=\la_i f$ 
we get $u_{\la_2}(x)<u_{\la_1}(x)$ for any $x\in\O$ and $\la_2<\la_1$ and therefore the monotonicity in $\la$ gives the existence of a pointwise limit $u_0(x)=\lim_{\la\to-\infty}u_\la(x)$.\\
We are left with showing that such a limit equals $t_0$ for all $x$; since the monotonicity of $u_\la$ is strict, this would give the inequality $u_\la>t_0$.\\

Let us start with the case when $\O=B_R$ is any ball, whose center is omitted for simplicity. As $u$ has constant sign, the Gidas-Ni-Nirenberg Theorem \cite{gnn} gives that $u$ is radial and radially increasing.\\
We first show that $u_0\ge t_0$ in the case $t_0\in\R$, whereas if $t_0=-\infty$ it is trivial. By contradiction, we assume $u_\la(x)<t_0$ for $x\in B_{R_\la}$ and $u_\la(x)=t_0$
for $x\in\pa B_{R_\la}$, for some $\la<0$ and $R_\la\in(0,R)$. Therefore, $u_\la$ solves 
$-\D u_\la=\la f(u_\la)$ in $B_{R_\la}$ but, since $f(t)=0$ for $t\le t_0$, $u\eq t_0$ also solves the same equation in $B_{R_\la}$ and the solution is unique in view of Lemma \ref{esiuni}; hence, $u_\la\eq t_0$ on $B_{R_\la}$ and we found a contradiction.\\
Now we prove $u_0\le t_0$, which jointly with the previous inequality gives $u_0\equiv t_0$ in $B_R$. If not, $u_\la\ge t_1>t_0$ on some $B_{R_1}\sub B_R$ for any $\la<0$; the monotonicity of $f$ yields $-\D u_\la=\la f(u_\la)\le\la f(t_1)$, and clearly $u_\la\le0$ on $\pa B_{R_1}$. Therefore we may apply the comparison principle to $u_\la$ and $\la f(t_1)\phi$, with $\phi$ being the unique solution to \eqref{tors} in $B_{R_1}$, to get $u_\la\le\la f(t_1)\phi$: since $f(t_1)>0$, we get $u_\la\us{\la\to-\infty}\to-\infty$ a.e. on $B_{R_1}$, contradicting $u_\la\ge t_1$.\\
Finally, the convergence is locally uniform in $B_{R_1}$ because, since $u_\la$ is radially increasing, for any $r<R$ one has
$$\sup_{B_r}|u_\la-t_0|=u_\la(r)-t_0\us{\la\to-\infty}\to0,$$
and when $t_0=-\infty$ 
$$\sup_{B_r}u_\la=u_\la(r)\us{\la\to-\infty}\to-\infty.$$\

Now, let us consider a generic domain $\O$. We consider two balls $B_{R_1}\sub\O\sub B_{R_2}$ and the solutions $u_{i,\la}$ to \eqref{pfl} on $B_{R_i}$: by applying twice Lemma \ref{confr} with $g_1=g_2=\la f$ we get $u_{2,\la}\le u_\la\le u_{1,\la}$ on $B_{R_1}$. Since we already proved that $u_{i,\la}\us{\la\to-\infty}\to t_0$ in $L^\infty_{\mrm{loc}}\l(B_{R_i}\r)$ for both $i$'s, we deduce $u_\la\us{\la\to-\infty}\to t_0$ in $L^\infty_{\mrm{loc}}(B_{R_1})$ and, since the choice of $B_{R_1}$ is arbitrary, also in $L^\infty_{\mrm{loc}}(\O)$.
\end{proof}

In the case when $t_0\in\R$ we can improve Theorem \ref{casogen} getting the following result:
\begin{prop}\label{lem-conv}$ $\\
Let $u_\la$ be the unique solution to \eqref{pfl} for $\la<0$. Assume $f$ satisfies \eqref{ipo}
and that $t_0\in\R$. Then
\begin{equation}\label{stima-che-serve}
\la f(u_\la)\us{\la\to-\infty}\to0\text{ a.e. in }\O,
\end{equation}
and $u_\la\us{\la\to-\infty}\to t_0$ in $C^1_\mrm{loc}(\O)$.
\end{prop}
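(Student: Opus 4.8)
The plan is to reduce both assertions to a single quantitative fact: that the nonlinear term $\la f(u_\la)$ tends to $0$ \emph{locally uniformly} in $\O$. Indeed, local uniform smallness gives the a.e.\ convergence at once, and, read off as the right‑hand side of $-\D u_\la=\la f(u_\la)$, it upgrades the $L^\infty_{\mrm{loc}}$ convergence $u_\la\to t_0$ of Theorem \ref{casogen} to $C^1_{\mrm{loc}}$ through interior elliptic estimates. Throughout I set $w_\la:=u_\la-t_0$, which by Theorem \ref{casogen} satisfies $0<w_\la<-t_0$ and $w_\la\to0$ in $L^\infty_{\mrm{loc}}(\O)$; note $w_\la$ is subharmonic, since $\D w_\la=-\la f(u_\la)\ge0$. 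I would first treat $\O=B$ a ball and then pass to a general domain by the two–sided comparison $u_{2,\la}\le u_\la\le u_{1,\la}$ with the solutions on an inner and an outer ball, exactly as at the end of the proof of Theorem \ref{casogen}.

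The first, soft step is an energy estimate. Fix a ball $B_{2R}\Sub\O$ and a cut-off $\eta\in C_c^\infty(B_{2R})$ with $\eta\eq1$ on $B_R$, $0\le\eta\le1$. Testing the equation against $\eta^2 w_\la$ and integrating by parts gives
\[\int_\O\eta^2|\n w_\la|^2-\la\int_\O f(u_\la)\eta^2 w_\la=-2\int_\O\eta w_\la\,\n\eta\cdot\n w_\la.\]
Since $-\la\int f(u_\la)\eta^2 w_\la\ge0$, Young's inequality absorbs the right-hand side and yields $\int\eta^2|\n w_\la|^2\le C\int|\n\eta|^2 w_\la^2\le C\|w_\la\|_{L^\infty(B_{2R})}^2\to0$; hence $\n u_\la\to0$ in $L^2_{\mrm{loc}}(\O)$, and also $-\la\int f(u_\la)\eta^2 w_\la\to0$. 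Testing next against $\eta^2$ alone,
\[\Big|\la\int_\O f(u_\la)\eta^2\Big|=\Big|\int_\O\n u_\la\cdot\n(\eta^2)\Big|\le 2\Big(\int\eta^2|\n w_\la|^2\Big)^{1/2}\Big(\int|\n\eta|^2\Big)^{1/2}\to0,\]
and, since $\la f(u_\la)$ has a sign, this already proves $\la f(u_\la)\to0$ in $L^1_{\mrm{loc}}(\O)$.

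The delicate step, and the main obstacle, is to turn this $L^1_{\mrm{loc}}$ smallness into \emph{local uniform} smallness of $\la f(u_\la)$: the difficulty is precisely that we face an indeterminate product of $|\la|\to+\infty$ with $f(u_\la)\to0$, and $L^1_{\mrm{loc}}$ convergence of a one-parameter family need not imply pointwise convergence. I would resolve this by a comparison/barrier argument on $B_{2R}$, exploiting the subharmonicity of $w_\la$ together with the absorption term: set $\e_\la:=\sup_{B_{2R}}w_\la\to0$, so that, $f$ being non-decreasing, $0\le\D w_\la=|\la|f(t_0+w_\la)\le|\la|f(t_0+\e_\la)$ on $B_{2R}$. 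One then constructs a radial super-solution of $\D W=|\la|f(t_0+W)$ on $B_{2R}$ equal to $\e_\la$ on $\pa B_{2R}$; because the screening length $\big(|\la|f(t_0+\e_\la)/\e_\la\big)^{-1/2}$ of the linearised operator shrinks to $0$, this barrier forces $w_\la$, and hence $|\la|f(u_\la)\le|\la|f(t_0+w_\la)$, to be exponentially small on $B_R$. The model case $f(t)=(t-t_0)^+$, where $w_\la$ solves $\D w_\la=|\la|w_\la$ and is an explicit modified Bessel profile decaying like $|\la|\,e^{-\sqrt{|\la|}\,\mrm{dist}(\cdot,\pa B_{2R})}$, shows both that this smallness is genuine and that the decay beats the factor $|\la|$; the technical heart is to run the comparison for a general $f$ satisfying \eqref{ipo}, where one must control $|\la|f(t_0+\e_\la)/\e_\la$ from below. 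Once $\sup_{B_R}|\la|f(u_\la)\to0$ is established on every ball (and transferred to $\O$ by the inner/outer ball sandwich), the a.e.\ convergence $\la f(u_\la)\to0$ is immediate, and feeding the uniformly small right-hand side into interior $W^{2,p}$/Schauder estimates gives $u_\la$ bounded in $C^{1,\a}_{\mrm{loc}}$; as the only possible limit is the constant $t_0$, the whole family converges to $t_0$ in $C^1_{\mrm{loc}}(\O)$.
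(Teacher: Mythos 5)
Your first step is correct, and it is genuinely different from what the paper does: testing the equation against $\eta^2(u_\la-t_0)$ and then against $\eta^2$ does give $\n u_\la\to0$ in $L^2_{\mrm{loc}}(\O)$ and $\la f(u_\la)\to 0$ in $L^1_{\mrm{loc}}(\O)$, facts the paper never states. But, as you yourself observe, all the content lies in upgrading this to pointwise/uniform control, and there your proposal has a genuine gap: the screening/barrier mechanism you invoke is false for a large class of nonlinearities allowed by \eqref{ipo}, including the paper's model case.

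Concretely, take $f(t)=\l((t-t_0)^+\r)^p$ with $p>1$ (the plasma nonlinearity, case (i) of Theorem \ref{finito} and Corollary \ref{i3}). There $w_\la:=u_\la-t_0=(-\la)^{-\frac1{p-1}}\big(v+o(1)\big)$ on compact sets, $v$ being the large solution of $\D v=v^p$: so $w_\la$ is only \emph{polynomially} small, not exponentially small. Accordingly, with $\e_\la=\sup_{B_{2R}}w_\la\approx(-\la)^{-\frac1{p-1}}\max_{B_{2R}}v$ one computes $|\la|f(t_0+\e_\la)/\e_\la=|\la|\,\e_\la^{p-1}\to\big(\max_{B_{2R}}v\big)^{p-1}$, a finite constant: the ``screening length'' does \emph{not} shrink to $0$, so the premise of your barrier argument fails. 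Worse, the barrier you want cannot exist: any supersolution $W$ of $\D W\le|\la|f(t_0+W)$ in $B_{2R}$ with $W\ge w_\la$ on $\pa B_{2R}$ dominates $w_\la$ by comparison (here monotonicity of $f$ is used), hence $W\ge(-\la)^{-\frac1{p-1}}\big(v+o(1)\big)$ and is never exponentially small inside. The structural reason is that your construction implicitly uses the secant bound $f(t_0+s)\ge\frac{f(t_0+\e_\la)}{\e_\la}s$ for $0<s\le\e_\la$, which holds for concave-type $f$ (the regime $p\le1$, where your Bessel picture is accurate) but is reversed for convex $f$; for very flat $f$ such as $e^{-\frac1{t-t_0}}$ (cf. \eqref{g2}) the decay of $w_\la$ is merely logarithmic in $|\la|$. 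So the ``technical heart'' you defer is not technical: it is exactly where the argument collapses, and it collapses in the cases the paper is most interested in. Note that your target statement (locally uniform convergence of $\la f(u_\la)$ to $0$) \emph{is} true, but it must be reached by a rate-free argument. That is what the paper does: it proves only the local bound $\sup_K|\la f(u_\la)|\le C_K$, by comparing $u_\la$ with the radial solution $u_{1,\la}$ on a ball $B_{2r}\Sub\O$ (Lemma \ref{confr}), using that $u_{1,\la}$ is radially increasing to spread any blow-up of $\la f(u_{1,\la})$ over a full annulus, and comparing there with $-M\phi$, $\phi$ the torsion function of the annulus, to contradict $u_{1,\la}>t_0$; then interior $W^{2,p}$ estimates give $u_\la\to t_0$ in $C^1_{\mrm{loc}}(\O)$ and the a.e.\ claim follows from the weak formulation. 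If you want uniform smallness on balls within your scheme, replace the barrier by the same comparison plus one integration of the radial ODE: if $|\la|f(u_{1,\la}(r_0))\ge\d>0$, radial monotonicity gives $\big(r^{N-1}u_{1,\la}'\big)'\ge\d r^{N-1}$ for $r\ge r_0$, hence $u_{1,\la}(r_1)-u_{1,\la}(r_0)\ge\d\,c(r_0,r_1)>0$, contradicting $u_{1,\la}\to t_0$; your inner/outer ball sandwich then transfers this to arbitrary compact subsets of $\O$.
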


\begin{proof}
First we prove that for every compact set $K\subset\O$ there exists a constant $C_K$ such that 
\begin{equation}\label{stima-che-serve-2}
\sup_K|\la f(u_\la)|\le C_K.
\end{equation}
By contradiction let us assume that \eqref{stima-che-serve-2} does not hold. Then there exists points $x_\la\in K$ such that $\la f(u_\la(x_\la))\to-\infty$ and, up to a sub-sequence $x_\la\to\ol x\in K$.\\
We take $B_r$ be a ball centered in $\ol x$ and such that $B_{2r}\subset\O$. We call $u_{1,\la}$ the radial solution to \eqref{pfl} in $B_{2r}$. We know by the proof of Theorem \ref{casogen} that $u_{1,\la}$ is radially increasing and that $u_\la<u_{1,\la}$ in $B_{2r}$. The monotonicity of $f$ then gives 
$$\la f(u_\la(x))>\la f(u_{1,\la}(x)) $$
in $B_{2r}$ and since $x_\la\in B_{2r}$ for $\la$ large enough then $\la f(u_{1,\la}(x_\la))\us{\la\to-\infty}\to-\infty$. By the monotonicity of $u_{1,\la}$ we also have that
$$\la f(u_{1,\la}(x))<\la f(u_{1,\la}(x_\la))\to-\infty$$
for every $x\in B_{2r}$ such that $|x-\ol x|>|x_\la-\ol x|.$ In particular we have that, denoting by $A_r=B_r\setminus B_{\frac r2}$, $\la f(u_{1,\la}(x))\to-\infty$ in $A_r$.\\
Last step is to show that this cannot happen. Let $M>0$. There exists $\ol\la<0$ such that 
$$\la f(u_{1,\la}(x))<-M \ \ \text{ in }A_r \ \ \text{ for every }\la<\ol\la.$$ 
We let $z_M$ be the solution to $-\D z_M=-M$ in $A_r$ with Dirichlet boundary conditions. Then by the weak and strong maximum principle we have $u_{1,\la}<z_M$ in $A_r$ and $z_M=-M\phi$ where $\phi$ is the unique solution to \eqref{tors} in $A_r$. Since $M$ is arbitrary this gives a contradiction with 
$$t_0<u_{1,\la}(x)<-M\phi(x)$$
which proves \eqref{stima-che-serve-2}. In order to show \eqref{stima-che-serve} remark that the r.h.s. of the equation satisfied by $u_\la$ is uniformly bounded in every compact set $K$ of $\O$. The standard regularity theory then say that $u_\la-t_0$ is uniformly bounded in $W^{2,p}(K)$ per every $p$, and that, up to a sub-sequence $u_\la-t_0\us{\la\to-\infty}\to0$ in $C^1(K)$. By the weak formulation of \eqref{pfl} we then get that $\la f(u_\la)\us{\la\to-\infty}\to0$ a.e. in $\O$.
\end{proof}

\section{Second order expansion of the solution $u_\la$: the case $t_0\in\R$}\label{s2}\

The aim of this section is to improve estimate \eqref{b1} in Theorem \ref{finito}.

\begin{proof}[Proof of Theorem \ref{finito}]$ $
\begin{itemize}
\item[(i)] By the assumptions on $\a,\g(\a)$ we have $-\frac\a{\g(\a)}\us{\a\searrow0}\to-\infty$. Therefore, after a re-labeling, the ratio $-\frac\a{\g(\a)}$ will decrease monotonically and, for any $\la\ll0$, there will be some $\a_\la$ such that $\la=-\frac{\a_\la}{\g(\a_\la)}$.\\
By such a choice, the function $v_\la$ defined by 
$$v_\la=\frac{u_\la-t_0}{\a_\la}$$
will solve
\begin{equation}\label{vla}
\l\{\begin{array}{ll}\D v_\la=\frac{f(\a_\la v_\la+t_0)}{\g(\a_\la)}&\text{in }\O\\v_\la=-\frac{t_0}{\a_\la}&\text{on }\pa\O\end{array}\r.
\end{equation}
and, by construction, satisfies $v_\la>0$. Next we show that $v_\la$ is bounded from above.\\
Observe that it is not restrictive to assume that $g_0\ge0$ and non-decreasing in such a way \eqref{keller} still holds. In view of the assumption $\frac{f(\a t+t_0)}{\g(\a)}\ge g_0(t)$, we can use Lemma \ref{confr} to get $v_\la\le v_{0,\la}$, with the latter solving 
$$\l\{\begin{array}{ll}\D v_{0,\la}=g_0(v_{0,\la})&\text{in }\O\\v_{0,\la}=-\frac{t_0}{\a_\la}&\text{on }\pa\O.\end{array}\r.$$
Let us introduce the large solution $v_0$ which satisfies
\begin{equation}\label{v0}
\l\{\begin{array}{ll}\D v_0=g_0(v_0)&\text{in }\O\\v_0(x)\us{x\to\pa\O}\to+\infty.\end{array}\r.
\end{equation}
We have $v_{0,\la}\le v_0$ in $\O$ (and, actually $v_{0,\la}\us{\la\to-\infty}\to v_0$) and the boundedness of $v_0$ gives that $v_\la$ is uniformly bounded from above in $L^\infty_{\mrm{loc}}(\O)$. The boundedness of $v_0$ on compact sets of $\O$ follows comparing $v_0$ with the large solution $v_{0,\rho}$ to \eqref{v0} in a small ball $B_\rho$ centered in $x_0\in\O$ and contained in $\O$. By Lemma \ref{confr} $0<v_0<v_{0,\rho}$ and $v_{0,\rho}$ is strictly increasing in the radial variable. This implies that $v_0$ is bounded in the ball $B_{\frac\rho2}$.\\
Since $v_\la$ is bounded in $L^\infty_{\mrm{loc}}(\O)$ and $\D v_\la$ is uniformly bounded for bounded $v_\la$, it will converge in $C^1_{\mrm{loc}}(\O)$ to some function $v$, and in view of the limit \eqref{g}, $v$ will solve $\D v=v^p$. Last step is to prove that $v(x)\us{x\to\pa\O}\to+\infty$. 
Define $\wt g(t):=\sup_{\la<0}\frac{f(\a_\la t+t_0)}{\g(\a_\la)}$ and because the latter converges for any $t$, we have $\wt g(t)<+\infty$ for any $t$. Therefore one may define $\wt v_\la$ as the solution to
\begin{equation}\label{vlatilde}
\l\{\begin{array}{ll}\D\wt v_\la=\wt g\l(\wt v_\la\r)&\text{in }\O\\\wt v_\la=-\frac{t_0}{\a_\la}&\text{on }\pa\O\end{array}\r.,
\end{equation}
and since $\wt g$ is non-negative and non-decreasing, arguing as in Lemma \ref{esiuni} 
we get $v_\la\ge\wt v_\la$, and since $-\frac{t_0}{\a_\la}\us{\la\to-\infty}\to+\infty$, we conclude that $v(x)\us{x\to\pa\O}\to+\infty$, namely $v$ is indeed a large solution.

\item[(ii)] We first consider the case where $\O$ is any domain of $\R^2$ or a convex domain of $\R^N$ with $N\ge3$ and \eqref{i2} holds. Let us take an absolute minimum point $x_\la\in\O$ and set $\a_\la:=\min u_\la-t_0=u_\la(x_\la)-t_0$; due to Theorem \ref{casogen}, we have $\a_\la\us{\la\to-\infty}\to0$ and $\lim_{\la\to-\infty}x_\la=x_0\in\O$ by Remark \ref{gnn}.
Now, since we assume $\frac{\g(\a)}\a\us{\a\searrow0}{\not\to}0$, then we set $\e_\la:=\sqrt\frac{\a_\la}{-\la\g(\a_\la)}\us{\la\to-\infty}\to0$.\\
The rescaled function $v_\la$ defined by 
$$v_\la(x)=\frac{u_\la\l(\e_\la x+x_\la\r)-t_0}{\a_\la}$$
solves
$$\l\{\begin{array}{ll}\D v_\la=\frac{f(\a_\la v_\la+t_0)}{\g(\a_\la)}&\text{in }\frac{\O-x_\la}{\e_\la}\\v_\la(x)\ge v_\la(0)=1\end{array}\r..$$
where $\frac{\O-x_\la}{\e_\la}\to\R^N$ by Remark \ref{gnn}.\\
We have that $v_\la$ satisfies 
\begin{equation}\label{linear}
-\D v_\la+c(x)v_\la=0 
\end{equation}
with $c(x)=\frac{f(\a_\la v_\la+t_0)}{\g(\a_\la)v_\la}\le Cv_\la^{q-1}$ with $0\le q\le1$ by \eqref{i3}.\\
Since $v_\la\ge1$ we get that $|c(x)|\le C$. The Harnack inequality applied to \eqref{linear} in any ball $B_R$ then gives
$$\sup_{B_R} v_\la<C_H\inf_{B_R}v_\la=C_H$$
and so $v_\la$ is uniformly bounded on every compact set of $\R^N$. Using \eqref{g} we can pass to the limit getting that $v_\la\to v$ in $C^1_\mrm{loc}\l(\R^N\r)$ where $v$ is a weak solution to \eqref{soluzione-intera} concluding the proof under the assumption \eqref{i2}.\\
Let us prove the convergence of $v_\la$ in the case $\O=B_R$ is a ball; in this case $v_\la$ is radial and solves
\begin{equation}\label{linear2}
\l\{\begin{array}{ll}v_\la''(r)+\frac{N-1}rv_\la'(r)=\frac{f(\a_\la v_\la(r)+t_0)}{\g(\a_\la)}&0<r<\frac R{\e_\la}\\v_\la'(0)=0\\v_\la(0)=1\end{array}\r..
\end{equation}
Because of the uniform convergence to $g$, there exist sequences $M_n$ and $\la_n$ with $M_n\to+\infty$ and $\la_n\to-\infty$ such that 
\begin{equation}\label{***}
\sup_{0<t\le M_n}\l|\frac{f(\a_{\la_n} t+t_0)}{\g(\a_{\la_n})}-g(t)\r|\us{n\to\infty}\to0.\end{equation} 
Therefore a comparison argument gives $v_{\la_n}(r)\le v_0(r)$ as long as $v_{\la_n}(r)\le M_n$, with $v_0$ solving
$$\l\{\begin{array}{ll}v_0''(r)+\frac{N-1}r v_0'(r)=g(v_0(r))+1&r\in\R\\v_0'(0)=0\\v_0(0)=1\end{array}\r..$$
We have that $v_0$ is well-defined for any $r$ because $g$ does not satisfy the condition \eqref{keller} (see Theorem $4$ in \cite{kel}). Now taking $r_n$ such that $v_{\la_n}(r_n)=M_n$, one has $v_0(r_n)\ge M_n\us{n\to\infty}\to+\infty$ and then $r_n\us{n\to\infty}\to+\infty$. Therefore, for any fixed $r>0$ we have, for large $n$, $r\le r_n$ and $v_{\la_n}(r)\le v_0(r)\le C$. Since $v_{\la_n}$ is bounded in $L^\infty_{\mrm{loc}}$ we can pass to the limit in \eqref{linear2} and $v_{\la_n}$ will also converge to the solution $v$ to \eqref{soluzione-intera}. Since from every sequence $\la_n\to-\infty$ we can extract a subsequence $\tilde\la_n$ that satisfies 
\eqref{***} then $v_\la$ converges to the solution $v$.
\end{itemize}
\end{proof}

\begin{rem}$ $\\
In the case of a more general decay of the function $f$, as when \eqref{g2} holds instead of \eqref{g}, we can argue as in the proof of Theorem \ref{finito} $(i)$ choosing $\la=-\frac{\a_\la}{f(\b_\la)}$ and replacing the function $v_\la$ with
$$v_\la:=\frac{u_\la-\b_\la}{\a_\la}.$$
As in the previous case one can find that $v_\la$ is bounded from above since $v_\la<v_0$. To prove that $v_\la$ is bounded by below we define $\wt g(t):=\sup_{t_0<\b<0}\frac{f(\a(\b)t+\b)}{f(\b)}$ and because the latter converges for any $t$, we have $\wt g(t)<+\infty$ for any $t$. Therefore one may define $\wt v$ as the solution to
\begin{equation}\label{vtilde}
\l\{\begin{array}{ll}\D\wt v=\wt g\l(\wt v\r)&\text{in }\O\\\wt v=0&\text{on }\pa\O\end{array}\r..
\end{equation}
Since $\wt g$ is non-negative and non-decreasing, arguing as in Lemma \ref{esiuni} we have that $\wt v$ is uniquely defined and belongs to $L^\infty(\O)$; therefore, Lemma \ref{confr} yields $v_\la\ge\wt v$ on $\O$, namely $v_\la$ is also uniformly bounded from below. The convergence of $v_\la$ to $v$ then follows as in the previous case. The only difference is that $v$ is a large solution to \eqref{large}.
\end{rem}

We end this section with two examples where Theorem \ref{finito} applies. In this case we exhibit explicitly the solutions.
\begin{ese}\label{ese2}$ $
\begin{itemize}
\item[(i)] $f(t)=\l((t+1)^+\r)^\frac{N+2}{N-2}$, $\O=B_1\sub\R^N$, $N\ge3$.\\
We are in the first alternative of Theorem \ref{finito}, with $t_0=-1,\a(\b)=\b+1$, $g_0(t)=g(t)=(t+1)^\frac{N+2}{N-2}$. In this case we have explicit solutions given by
$$u_\la(x)=\l(\frac{\d_\la-1}{\d_\la-|x|^2}\r)^\frac{N-2}2-1\q\q\q\d_\la:=\frac{N^2-2N-2\la+\sqrt{N(N-2)(N(N-2)-4\la)}}{-2\la}.$$
and 
$$u_\la(x)\to-1\text{ in }L^\infty_\mrm{loc}(B_1).$$
Taking $\b_\la:=-1+(-\la)^\frac1{p-1},\a_\la:=(-\la)^\frac1{p-1}$, we have
$$v_\la(x):=\frac{u_	\la+1}{(-\la)^\frac1{p-1}}-1=\l(\frac{ \la^2(\d_\la-1)}{\d_\la-|x|^2}\r)^\frac{N-2}2-1\us{\la\to-\infty}\to\l(\frac{N(N-2)}{1-|x|^2}\r)^\frac{N-2}2-1=:v(x),$$
the latter being the unique large solution to $\D v=(v+1)^\frac{N+2}{N-2}$ in $\O$.

\item[(ii)] $f(t)=(t+1)^+$, $\O=(-1,1)\sub\R$.\\
We are in the second alternative of Theorem \ref{finito}, with $\g(\a)=\a$ and $g(t)=t$. In fact, explicit solutions are given by
$$u_\la(x)=\frac{\cosh\l(\sqrt{-\la}x\r)}{\cosh\sqrt{-\la}}-1$$
and 
$$u_\la(x)\to-1\text{ in }L^\infty_\mrm{loc}(-1,1).$$
Taking $\a_\la=u_\la(0) +1=\frac1{\cosh{\sqrt{-\la}}}$, $x_\la=0$ and $\e_\la=\frac1{\sqrt{-\la}}$, we have
$$v_\la(x)=\frac{u_\la\l(\frac x{\sqrt{-\la}}\r)+1}{u_\la(0)+1}\eq\cosh x:=v(x),$$
the latter being the solution to
$$\l\{\begin{array}{ll}v''(x)=v(x)&x\in\R\\v(0)=1\\v'(0)=0\end{array}\r..$$
In the case of the same nonlinearity on $\O=B_1\sub\R^N$, $N\ge2$, the same argument holds true with $\cosh x$ being replaced with the solution to 
\begin{equation}\label{****}
\l\{\begin{array}lv''(r)+\frac{N-1}rv'(r)=v(r)\\v(0)=1\\v'(0)=0.\end{array}\r.
\end{equation}
Using the last statement in Corollary \ref{i3} we can compute the explicit solutions to \eqref{pfl} for any $\la$, which are given by
$$u_\la (r)=\frac1{v\l(\sqrt{-\la}\r)}v\l(\sqrt{-\la}r\r)-1$$
where $v$ the unique radial solution to \eqref{****}.\\
For $N=3$, since it is known that  $v(r)=\frac{\sinh r}r$, we have that
$$u_\la(r)=\frac1{\sinh\sqrt{-\la}}\frac{\sinh\l(\sqrt{-\la}r\r)}r-1.$$
 A simple computation then gives
\[J_\la(u_\la)=\frac 12 \int_{B_1}|\nabla u_\la|^2-\la (u_\la+1)^2 dx = (2\pi+o(1))\sqrt{-\la}\]
as $\la \to \infty$ if $J_\la$ is as defined in \eqref{J}.
\end{itemize}
\end{ese}

\section{Refined expansions of the solution $u_\la$: the case $t_0=-\infty$}\label{s3}\

In this section we split the proof in two parts, according the limit of $f(t)$ at $-\infty$. Let us start with the case where the limit at $-\infty$ is positive. 

\subsection{The case $\lim_{t\to-\infty}f(t)=c_0>0$}\

In this case \emph{second order estimates} for the solution $u_\la$ will be provided without additional assumptions on $f$.

\begin{proof}[Proof of Theorem \ref{b2}]$ $\\
Denote by $v_\la=\frac{u_\la}\la$. It verifies,
\begin{equation}\label{c4}
\begin{cases}
-\D v_\la=f(u_\la)&\text{in }\O\\v_\la=0&\text{on }\pa\O.
\end{cases}
\end{equation}
By the properties of $f$ we get that
$$c_0\le f(u_\la)\le f(0)$$
and then by the standard regularity theory we get that there exists $\phi$ such that $v_\la\to\phi$ in $C^1(\O)$. Moreover, by Theorem \ref{casogen} and $\lim_{t\to-\infty}f(t)=c_0>0$
we get that
$$f(u_\la)\to c_0\q\text{in }L^\infty_\mrm{loc}(\O).$$
Passing to the limit in \eqref{c4} the claim follows.
\end{proof}
Next we consider the other case.\\

\subsection{The case $\lim_{t\to-\infty}f(t)=0$}\label{s4}\

Here the argument are very similar to that in Theorem \ref{finito}. We will sketch the main points.

\begin{proof}[Proof of Theorem \ref{infinito}]$ $
\begin{itemize}
\item[(i)] Since $\frac{\g(\b)}{\a(\b)}\us{\b\to-\infty}\to0$, without loss of generality we may assume the ratio to decrease monotonically and, for $\la\ll0$, we take $\b_\la,\a_\la=\a(\b_\la)$ such that $\la=-\frac{\a(\b_\la)}{\g(\b_\la)}$.\\
We define $v_\la:=\frac{u_\la-\b_\la}{\a(\b_\la)}$, which will solve
$$\l\{\begin{array}{ll}\D v_\la=\frac{f(\a_\la v_\la+\b_\la)}{\g(\b_\la)}&\text{in }\O\\v_\la=-\frac{\b_\la}{\a_\la}&\text{on }\pa\O\end{array}\r.,$$
and we have the inequalities $\wt v\le v_\la\le v_0$ in $\O$, with $v_0,\wt v$ respectively defined by \eqref{v0}, \eqref{vtilde}. From this we deduce $v_\la$ is locally uniformly bounded in $\O$ and converges to some solution $v$ to $\D v=g(v)$; finally, we take $\wt v_\la$ solving
$$\l\{\begin{array}{ll}\D\wt v_\la=\wt g\l(\wt v_\la\r)&\text{in }\O\\\wt v_\la=-\frac{\b_\la}{\a_\la}&\text{on }\pa\O\end{array}\r.,$$
therefore from the inequality $v_\la\ge\wt v_\la$ and $\frac{-\b_\la}{\a_\la}\us{\la\to-\infty}\to+\infty$ we deduce $v|_{\pa\O}=+\infty$.

\item[(ii)] Since $\frac\b{\a(\b)}\ge A-1$, we must have $\a(\b)\us{\b\to-\infty}\to+\infty$, hence again $\frac{\g(\b)}{\a(\b)}\us{\b\to-\infty}\to0$; we may assume the latter limit to decrease monotonically and take $\b_\la$ so that $-\la=\frac{\a(\b_\la)}{\g(\b_\la)}$. Therefore, $v_\la:=\frac{u_\la}{\a(\b_\la)}$ solves 
$$\l\{\begin{array}{ll}\D v_\la=\frac{f(\a(\b_\la)v_\la)}{\g(\b_\la)}&\text{in }\O\\v_\la=0&\text{on }\pa\O.\end{array}\r.$$\\
By assumption, we have $v_\la\le0$ and moreover $\frac{f(\a(\b_\la)t)}{\g(\b_\la)}\us{\la\to-\infty}\to g(t-A)$
 locally uniformly in $t$, therefore one may define $\wt g(t):=\sup_\la\frac{f(\a(\b_\la)t)}{\g(\b_\la)}<+\infty.$ Since $\wt g(t)\us{t\to0}\to+\infty$ and it increases monotonically for any $t<0$, there exists a solution $\wt v$ to 
$$\l\{\begin{array}{ll}\D\wt v=\wt g\l(\wt v\r)&\text{in }\O\\\wt v=0&\text{on }\pa\O,\end{array}\r.$$
which is uniformly bounded in $\ol\O$ (see Theorem 1.1 in \cite{crt} for details).\\
Therefore, $v_\la\ge\wt v$ hence it is uniformly bounded in $C\l(\ol\O\r)$ and, in view of the convergence of $f$, it will converge to the solution to \eqref{solsing}.
\end{itemize}
\end{proof}

As in the previous section we end with two examples where explicit solutions are provided:

\begin{ese}\label{ese}$ $
\begin{itemize}
\item[(i)] $f(t)=e^t$ on $\O=B_1\sub\R^2$.\\
We are in the first alternative of Theorem \ref{infinito}, with $\a(\b)=1$ and $g(t)=g_0(t)=e^t$. In fact, explicit solutions are given by
$$u_\la(x)=\log\fr{8\d_\la}{-\la\l(\d_\la-|x|^2\r)^2},\q\q\q\q\d_\la=1+\fr{4+2\sqrt{4-2\la}}{-\la}=1+\frac{2\sqrt2+o(1)}{\sqrt{-\la}}.$$
Taking $\b_\la=\log(-\la),\a_\la=1$, we have
$$v_\la(x)=\log\fr{8\d_\la}{\l(\d_\la-|x|^2\r)^2}\us{\la\to-\infty}\to\log\frac8{\l(1-|x|^2\r)^2}=:v(x),$$
the latter being the large solution to $\D v=e^v$ on $\O$. Moreover a straightforward computation gives that
\begin{equation}
J_\la(u_\la):=\frac 12 \int_{\O}|\nabla u_\la|^2 dx-\la \int_\O\left(e^{u_\la}-1\right)dx=\big(2\sqrt2\omega_N+o(1)\big)\sqrt{-\la}
\end{equation}
where $\omega_N$ is the area of the unit ball in $\R^N$.
\item[(ii)] $f(t)=\frac1{(1-v)^3}$ on $\O=(-1,1)\sub\R$.\\
We are in the second alternative of Theorem \ref{infinito}, with $p=3,\g(\a)=\a^3$. In fact, explicit solutions are given by
$$u_\la(x)=1-\sqrt{1+\frac{-2\la}{1+\sqrt{1-4\la}}\l(1-x^2\r)}.$$
Taking $\a_\la=\frac1{(-\la)^\frac14}$, we have
$$v_\la(x)=\frac{u_\la(x)}{(-\la)^\frac14}\us{\la\to-\infty}\to-\sqrt{1-x^2}=:v(x),$$
the latter being the solution to $\l\{\begin{array}{ll}v''=\frac1{(-v)^3}&\text{in }(-1,1)\\v(1)=v(-1)=0\end{array}\r.$.
\end{itemize}
\end{ese}\

\appendix

\section{Appendix}\

In this Appendix we show that the assumption on $f$ considered in Theorems \ref{finito}, \ref{infinito} are rather general. In fact, they occur any time one has the following \emph{asymptotic homogeneity} condition:
\begin{equation}\label{omogcond}
\frac{f(\a(\b)t+\b)}{\g(\b)}\us{\b\searrow t_0}\to g(t)\q\q\q\text{locally uniformly for }-\sup_\b\frac{\b-t_0}{\a(\b)}<t<\sup_\b\frac{-\b}{\a(\b)}\text{, for some }g\not\eq0,
\end{equation}
for some $\a(\b),\g(\b)>0$, with $f$ satisfying \eqref{ipo} and \eqref{t0} defined by $t_0$.\\
On the other hand, condition \eqref{omogcond} seems to be necessary in order to pass to the limit in the equation \eqref{pfl}, after taking a rescalement.

\begin{lemma}\label{omog}$ $\\
Assume $f$ satisfies \eqref{ipo}, $t_0$ is defined by \eqref{t0} and there exist and $\a(\b),\g(\b)>0$ such that \eqref{omogcond}. Then,
\begin{enumerate}
\item In \eqref{omogcond}, one can take without restriction $\g(\b)=f(\b)$ and $g$ satisfying $g(0)=1$;
\item If $t_0\in\R$, then \eqref{omogcond} can hold only of $\a(\b)\us{\b\nearrow t_0}\to0$;
\item If $t_0\in\R$ and $\lim_{\b\searrow t_0}\frac{\b-t_0}{\a(\b)}=\ol t\in\R_{>0}$, then $g(t)=\l(\l(t+\ol t\r)^+\r)^p$ for some $p>0$ and $\frac{f(\a(\b)t+t_0)}{f(\b)}\us{\b\searrow t_0}\to t^p$ locally uniformly in $t>0$;
\item If $t_0=+\infty$ and $\lim_{b\to-\infty}\frac{-\b}{\a(\b)}=-\ol t\in\R_{<0}$, then $g(t)=\frac1{\l(\l(\ol t-t\r)^+\r)^p}$ for some $p\ge0$ and $\frac{f(\a(\b)t)}{f(\b)}\us{\b\searrow t_0}\to\frac1{(-t)^p}$ locally uniformly in $t<0$.
\end{enumerate}
\end{lemma}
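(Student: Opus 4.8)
The plan is to read \eqref{omogcond} as a statement of \emph{regular variation} for $f$ (centered at $t_0$ when $t_0\in\R$, or at $-\infty$ when $t_0=-\infty$) and to reproduce the Karamata characterization: a non-trivial monotone rescaling limit is necessarily a power. Throughout I use that, since $f$ is non-decreasing, for each $\beta$ the map $t\mapsto\frac{f(\alpha(\beta)t+\beta)}{\gamma(\beta)}$ is non-decreasing, hence so is its locally uniform limit $g$, which is moreover real-valued (finite) on its domain and satisfies $g\ge0$ because $f\ge0$.

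\emph{Item 1 (normalization).} Evaluating \eqref{omogcond} at $t=0$ gives $\frac{f(\beta)}{\gamma(\beta)}\to g(0)$. Since $g\not\eq0$ and $g$ is non-decreasing there is $t^*$ with $g(t^*)\in(0,+\infty)$; translating the base point $\beta\mapsto\beta+\alpha(\beta)t^*$ (which replaces $g(\cdot)$ by $g(\cdot+t^*)$ and leaves the scale $\alpha$ untouched) I may assume $g(0)\in(0,+\infty)$. Then
$$\frac{f(\alpha(\beta)t+\beta)}{f(\beta)}=\frac{f(\alpha(\beta)t+\beta)/\gamma(\beta)}{f(\beta)/\gamma(\beta)}\us{\beta\searrow t_0}\to\frac{g(t)}{g(0)},$$
so taking $\gamma=f$ and replacing $g$ by $g/g(0)$ yields $\gamma(\beta)=f(\beta)$ and $g(0)=1$, which is item 1.

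\emph{Items 3--4 (the power law).} This is the core. Assume first $t_0\in\R$ and set $\phi(r):=f(t_0+r)$ for $r>0$; by \eqref{ipo} and \eqref{t0}, $\phi$ is non-decreasing with $\phi(0^+)=0$. In the regime of item 3 one has $\beta-t_0=\alpha(\beta)(\bar t+o(1))$, so after the normalization $\gamma=f$,
$$\frac{f(\alpha(\beta)t+\beta)}{f(\beta)}=\frac{\phi\big(\alpha(\beta)(t+\bar t+o(1))\big)}{\phi\big(\alpha(\beta)\bar t(1+o(1))\big)}\us{\beta\searrow t_0}\to g(t).$$
I would then show that $\chi(u):=\lim_{r\to0^+}\frac{\phi(ru)}{\phi(r)}$ exists for every $u>0$: along the sequence $r=\alpha(\beta)$ this limit is a ratio of values of $g$ by the display above, and the local \emph{uniformity} in \eqref{omogcond}, together with the freedom to take $\alpha$ continuous and onto a right neighborhood of $0$, upgrades convergence along $r=\alpha(\beta)$ to convergence through all $r\to0^+$. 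Composing dilations, $\frac{\phi(ruv)}{\phi(r)}=\frac{\phi((ru)v)}{\phi(ru)}\frac{\phi(ru)}{\phi(r)}$, and letting $r\to0^+$ gives Cauchy's multiplicative equation $\chi(uv)=\chi(u)\chi(v)$; since $\chi$ is non-decreasing this forces $\chi(u)=u^p$ for some $p\ge0$, i.e. $\phi$ is regularly varying of index $p$. Feeding this back yields $\frac{\phi(\alpha(\beta)t)}{\phi(\beta-t_0)}\to t^p$ locally uniformly for $t>0$ and $g(t)=\big((t+\bar t)^+\big)^p$, the vanishing for $t\le-\bar t$ coming from $f\eq0$ on $(-\infty,t_0]$; here $p>0$ because $g$, being a locally uniform limit of continuous functions, is continuous, and $\big((t+\bar t)^+\big)^p$ is continuous at $t=-\bar t$ only when $p>0$. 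For item 4 ($t_0=-\infty$, $\lim_{-\infty}f=0$) the same scheme run as the argument tends to $-\infty$ produces $g(t)=\frac1{((\bar t-t)^+)^p}$ and $\frac{f(\alpha(\beta)t)}{f(\beta)}\to\frac1{(-t)^p}$, now with $p\ge0$, the value $p=0$ returning the case $\lim_{-\infty}f=c_0>0$ of Theorem \ref{b2}.

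\emph{Item 2, and the main obstacle.} For item 2, with $t_0\in\R$ and the normalization $\gamma=f$ in force, I argue by contradiction: if $\alpha(\beta)\not\to0$ then along a subsequence $\alpha(\beta)\ge\delta>0$, so for fixed $t>0$ the argument satisfies $\alpha(\beta)t+\beta\ge t_0+\delta t$ for $\beta$ close to $t_0$, whence $f(\alpha(\beta)t+\beta)\ge f(t_0+\delta t)>0$ while $f(\beta)\to f(t_0)=0$; thus $\frac{f(\alpha(\beta)t+\beta)}{f(\beta)}\to+\infty$, contradicting the finiteness of $g(t)$. Hence $\alpha(\beta)\us{\beta\searrow t_0}\to0$. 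The genuinely delicate point of the whole lemma is the existence and identification of $\chi$: upgrading the scaling limit from the single curve $r=\alpha(\beta)$ to \emph{all} $r\to0^+$ is exactly the uniform convergence theorem of Karamata theory, and carrying out the re-centering $\beta\mapsto\beta'$ needed to compose dilations requires controlling $\alpha(\beta')/\alpha(\beta)\to1$, which is where the local uniform convergence assumed in \eqref{omogcond} is indispensable. Once $\chi(u)=u^p$ is established, the rest is bookkeeping with the normalization and the shift $\bar t$.
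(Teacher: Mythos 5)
Your proposal is correct and, for items 1 and 2, essentially coincides with the paper's proof (evaluate \eqref{omogcond} at $t=0$ and renormalize; contradiction from $\alpha(\beta)\ge\delta_0>0$ via the monotonicity of $f$ and $f(\beta)\to 0$); your translation trick for the degenerate case $g(0)\in\{0,+\infty\}$ is a refinement the paper silently skips. For items 3 and 4 both arguments rescale, compose dilations, and solve a functional equation among monotone functions, but they are organized differently, and the difference matters. The paper uses the $\epsilon$-sandwich (monotonicity of $f$ plus continuity of $g$) to get $\frac{f(\alpha(\beta)(t+\overline t)+t_0)}{f(\beta)}\to g(t)$, then chooses $\tilde\beta$ with $\alpha(\tilde\beta)=2\alpha(\beta)$ and derives the single-factor equation $g(2t+\overline t)=Lg(t)$, from which it asserts the power form. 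You instead work with $\phi(r)=f(t_0+r)$, establish that $\chi(u)=\lim_{r\to 0^+}\phi(ru)/\phi(r)$ exists for every $u>0$, and obtain the full multiplicative Cauchy equation $\chi(uv)=\chi(u)\chi(v)$. Your route is actually more robust at the final step: a continuous increasing solution of the factor-$2$ equation alone need not be a power (e.g.\ $h(s)=s^p(1+\epsilon\sin(2\pi\log_2 s))$ with $\epsilon$ small is increasing and satisfies $h(2s)=2^p h(s)$), so the paper's conclusion really requires its argument to be run for all dilation factors, which is exactly what your Cauchy equation packages. Conversely, the step you single out as the main obstacle is cheaper than you think and needs no Karamata uniform convergence theorem: in items 3--4 the standing hypothesis $\frac{\beta-t_0}{\alpha(\beta)}\to\overline t\in(0,+\infty)$ pins $\alpha(\beta)=(1+o(1))\frac{\beta-t_0}{\overline t}$, so you may take $r=\beta-t_0$ itself as the scaling variable --- it sweeps a full right neighborhood of $0$ --- and the same $\epsilon$-sandwich converts the limit along the curve $r=\alpha(\beta)$ into the limit through all $r\to 0^+$; note that the paper's exact choice of $\tilde\beta$ with $\alpha(\tilde\beta)=2\alpha(\beta)$ has precisely the surjectivity issue you worry about, and is repaired in the same way. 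Finally, both proofs share the same two loose ends: item 2 silently uses $f(\beta)\to 0$ as $\beta\searrow t_0$ (not literally forced by \eqref{ipo} if $f$ jumps at $t_0$), and the normalizations $g(0)=1$, $g(t)=((t+\overline t)^+)^p$ and limit $t^p$ are mutually consistent only up to the positive constant $\overline t^p$, in your write-up just as in the paper's statement.
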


\begin{proof}$ $\\
\begin{enumerate}
\item Because of the convergence at $t=0$, one has $\frac{f(\b)}{\g(\b)}\us{\b\searrow t_0}\to g(0)$, therefore $\frac{f(\a(\b)t+\b)}{f(\b)}\us{\b\searrow t_0}\to\frac{g(t)}{g(0)}$.
\item If $t_0\in\R$ and $\a(\b)\ge\d_0>0$ on a sub-sequence, then we would get:
$$g(1)=\lim_{\b\searrow t_0}\frac{f(\a(\b)+\b)}{f(\b)}\ge\frac{f(\d_0+\b)}{f(\b)}.$$
Since $f(\d_0)>f(t_0)=0$, then passing to the limit on the right-hand side we would get $+\infty$, hence a contradiction.
\item Since $\b=t_0+\a(\b)\l(\ol t+o(1)\r)$, then for any $t,\e>0$ we will have, for $\b$ close enough to $t_0$, $\a(\b)\l(t+\ol t-\e\r)+t_0\le\a(\b)t+\b\le\a(\b)\l(t+\ol t+\e\r)+t_0$; therefore, by the monotonicity of $f$,
$$\limsup_{\b\searrow t_0}\frac{f\l(\a(\b)\l(t+\ol t-\e\r)+t_0\r)}{f(\b)}\le g(t)\le\liminf_{\b\searrow t_0}\frac{f(\a(\b)\l(t+\ol t+\e\r)+t_0)}{f(\b)}.$$
As $\e$ is arbitrary and $g$ is continuous, we conclude that $\frac{f\l(\a(\b)\l(t+\ol t\r)+t_0\r)}{f(\b)}\us{\b\searrow t_0}\to g(t)$. Now, we take $\wt\b$ such that $\a\l(\wt\b\r)=2\a(\b)$ and compute the previous limit with $2t+\ol t$ in place of $t$:
$$g\l(2t+\ol t\r)=\lim_{\b\searrow t_0}\frac{f\l(\a(\b)\l(2t+2\ol t\r)+t_0\r)}{f(\b)}=\lim_{\b\searrow t_0}\frac{f\l(2\a(\b)\l(t+\ol t\r)+t_0\r)}{f(\b)}=\lim_{\b\searrow t_0}\frac{f\l(\a\l(\wt\b\r)\l(t+\ol t\r)+t_0\r)}{f\l(\wt\b\r)}\frac{f\l(\wt\b\r)}{f(\b)}.$$
Since $\a\l(\wt\b\r)=2\a(\b)\to0$, we have $\wt\b\to t_0$, hence the first factor of the right-hand side goes to $g(t)$; therefore we get, for any $t$, we have $g\l(2t+\ol t\r)=Lg(t)$, with $L:=\lim\frac{f\l(\wt\b\r)}{f(\b)}$. Because of the uniform convergence, $g$ is continuous, and it is also non-negative, non-decreasing and satisfies the condition $g\l(2t+\ol t\r)=Lg(t)$: it must be of the kind $g(t)=C\l(\l(t+\ol t\r)^+\r)^p$ for some $C>0,p\ge0$, and $g(0)=0$ implies $C=1$. The final limit follows by passing from $t+\ol t$ to $t$.
\item We argue similarly as before. Since $\b=\a(\b)\l(-\ol t+o(1)\r)$, then $\frac{f\l(\a(\b)\l(t-\ol t\r)+t_0\r)}{f(\b)}\us{\b\searrow t_0}\to g(t)$. We take $\wt\b$ such that $\a\l(\wt\b\r)=2\a(\b)$, and in this case we have $\wt\b\to-\infty$ because the latter goes to $+\infty$. Therefore, as before,
$$g\l(2t-\ol t\r)=\lim_{\b\to-\infty}\frac{f\l(2\a(\b)\l(t-\ol t\r)\r)}{f(\b)}=\lim_{\b\searrow t_0}\frac{f\l(\a\l(\wt\b\r)\l(t-\ol t\r)\r)}{f\l(\wt\b\r)}\frac{f\l(\wt\b\r)}{f(\b)}=Lg(t),$$
which implies $g$ is of a power type and the rest of the statement.
\end{enumerate}
\end{proof}

\bibliography{fralucamax}
\bibliographystyle{abbrv}

\end{document}